\documentclass[11pt]{amsart}
\synctex=1
\usepackage{amssymb}
\usepackage[mathscr]{eucal}
\usepackage{hyperref}
\usepackage{url}
\usepackage[all]{xy}

\makeatletter
\let\@wraptoccontribs\wraptoccontribs
\makeatother

\newtheorem{thm}[equation]{Theorem}
\newtheorem{lem}[equation]{Lemma}
\newtheorem{cor}[equation]{Corollary}
\newtheorem{prop}[equation]{Proposition}

\theoremstyle{definition}
\newtheorem{rem}[equation]{Remark}
\newtheorem{defn}[equation]{Definition}

\numberwithin{equation}{section}

\def\Z{\mathbb{Z}}
\def\Q{\mathbb{Q}}
\def\F{\mathbb{F}}

\def\bN{\mathbf{N}}

\def\Fl{\F_\ell}

\def\O{\mathcal{O}}
\def\P{\mathcal{P}}

\def\H{\mathcal{H}}

\def\K{\mathcal{K}}

\def\L{\mathcal{L}}

\def\cS{\mathcal{S}}

\def\cC{\mathcal{C}}

\def\a{\mathfrak{a}}

\def\p{\mathfrak{p}}
\def\b{\mathfrak{b}}
\def\q{\mathfrak{q}}

\def\Hom{\mathrm{Hom}}
\def\Gal{\mathrm{Gal}}
\def\rk{\mathrm{rank}}

\def\ord{\mathrm{ord}}

\def\Sel{\mathrm{Sel}}

\def\Aut{\mathrm{Aut}}
\def\loc{\mathrm{loc}}

\def\ur{\mathrm{ur}}

\def\GL{\mathrm{GL}}
\def\SL{\mathrm{SL}}
\def\ab{\mathrm{ab}}

\def\ram{\mathrm{ram}}

\def\map#1{\;\xrightarrow{#1}\;}
\def\bmu{\boldsymbol{\mu}}
\def\too{\longrightarrow}
\def\onto{\twoheadrightarrow}

\def\dirsum#1{\underset{#1}{\textstyle\bigoplus}}

\def\K{K}
\def\Hu{H^1_\ur}
\def\OK{\O_K}
\def\TT{T}

\setcounter{tocdepth}{1}

%%%%%%%%%%%%%%%%%%%%%%%%%%%%%%%%%%%%%%%%%%%%%%%

\title{Big fields that are not large}

\author{Barry Mazur}
\address{Department of Mathematics, 
Harvard University,
Cambridge, MA 02138, 
USA}
\email{\href{mailto:mazur@g.harvard.edu}{mazur@g.harvard.edu}}
\author{Karl Rubin}
\address{Department of Mathematics, 
UC Irvine,
Irvine, CA 92697, 
USA}
\email{\href{mailto:krubin@uci.edu}{krubin@uci.edu}}

\subjclass[2010]{Primary: 11R04, 11U05, 14G05}

\begin{document}

\begin{abstract}
A subfield $K$ of $\bar\Q$ is {\em large} if every smooth curve $C$ over $K$ with 
a $K$-rational point has infinitely many $K$-rational points.  A subfield $K$ of $\bar\Q$ 
is {\em big} if for every positive integer $n$, $K$ contains a number field $F$ with $[F:\Q]$ 
divisible by $n$.  The question of whether all big fields are large seems to have 
circulated for some time, although we have been unable to find its origin.  
In this paper we show that there are big fields that are not large.
\end{abstract}

\maketitle

\section{Introduction}

\begin{defn}
Following Pop \cite{pop}, we say that a field $K$ is {\em large} if for every smooth curve 
$C$ defined over $K$, if the set of $K$-rational points $C(K)$ is non-empty then $C(K)$ is infinite.
Equivalently (see \cite[Proposition 1.1]{pop}), $K$ is large if for every irreducible 
variety $V$ defined over $K$ with a smooth 
$K$-rational point, $V(K)$ is Zariski dense in $V$.
\end{defn}

Large fields (sometimes called {\em ample} fields) play a role in a number 
of basic conjectures regarding fields of algebraic numbers of infinite degree.   
For example, a conjecture of  Shafarevich asserts that the absolute Galois group 
of $\Q^{\ab}$, the maximal abelian extension of $\Q$, is isomorphic to
the free profinite group with countably infinitely many generators.   
It follows from a theorem of Pop \cite[Theorem 2.1]{pop} that 
if  $\Q^{\ab}$ is large, then Shafarevich's conjecture holds.
For more about large fields, see  Section \ref{add} below.

Recall that a {\em supernatural number} is a formal product 
$$
\prod_p p^{n_p}
$$
where $p$ runs through all rational primes and $0 \le n_p \le \infty$, 
with the obvious notion of divisibility.  If $K/L$ is an algebraic extension 
of fields, then we define the degree of $K$ over $L$
$$
[K:L] := \mathrm{lcm} \{[F:L] : \text{$F$ is a finite extension of $L$ in $K$}\}.
$$
\begin{defn}
We say that a field $K \subset \bar{\Q}$ is {\em big} if $[K:\Q] = \prod_p p^\infty$.
Equivalently, $K$ is big if for every positive integer $n$, $K$ contains a number 
field $F$ with $[F:\Q]$ divisible by $n$.
\end{defn}

The main result of this note is the following.

\begin{thm}
\label{main}
There are (uncountably many)  big fields that are not large.
\end{thm}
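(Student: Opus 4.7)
The plan is to build $K$ as a nested union $\Q = K_0 \subset K_1 \subset K_2 \subset \dots$ of number fields in $\bar\Q$ satisfying (i) for every positive integer $n$ there exists $i$ with $n \mid [K_i : \Q]$, and (ii) $C_0(K_i) = C_0(\Q)$ for every $i$, where $C_0/\Q$ is a fixed smooth projective curve of genus at least $2$ with $C_0(\Q)$ nonempty. By Faltings's theorem, $C_0(K_i)$ is finite at every stage. Setting $K = \bigcup_i K_i$, condition (i) makes $K$ big, while (ii) forces $C_0(K) = C_0(\Q)$, which is finite and nonempty, so $K$ is not large.

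The whole construction reduces to an inductive step: given a number field $F \subset \bar\Q$ with $C_0(F)$ finite and a positive integer $n$, produce infinitely many finite extensions $L/F$ with $n \mid [L:\Q]$ and $C_0(L) = C_0(F)$. My approach would be to parametrize candidates in a Kummer-type family $L_a = F(\zeta_n, \sqrt[n]{a})$ for $a \in F^\times$, so that $n \mid [L_a : \Q]$ is automatic. Any new point $P \in C_0(L_a) \setminus C_0(F)$ forces $F(P) \subseteq L_a$, and since the subfields of $L_a$ are arithmetically controlled, each such $P$ restricts $a$ to a small union of cosets in $F^\times$ modulo appropriate higher powers. To find infinitely many valid $a$, I would specialize to a uniformizer $a = \pi_\q$ at varying primes $\q$ of $F$ and use Chebotarev density to control the Frobenius of $\q$ in any prescribed finite collection of candidate Kummer subfields. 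Distinct valid primes give linearly disjoint extensions, so iterating the construction along a binary tree yields $2^{\aleph_0}$ distinct fields $K$.

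The main obstacle will be to verify that the countable collection of forbidden cosets does not exhaust $F^\times/(F^\times)^n$ as $P$ ranges over $C_0(\bar\Q) \setminus C_0(F)$. The cleanest way to address this is to choose $C_0$ with simple Jacobian and high gonality $\gamma$, so that for each $d < \gamma$ the subvariety $\mathrm{Sym}^d C_0 \subset \mathrm{Jac}(C_0)$ has only finitely many $F$-rational points by Faltings's theorem on subvarieties of abelian varieties, reducing the collection of dangerous Kummer subfields at each fixed degree to a finite list that Chebotarev can avoid. Because we must ultimately handle every $n$ — including $n \geq \gamma$ — this step needs real care and likely either a compatible system of curves of growing gonality used in parallel, or a more refined analysis of dangerous Kummer extensions of large degree.
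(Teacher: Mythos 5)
Your strategy is genuinely different from the paper's, and it contains a gap that you yourself flag but do not close, and which I believe is fatal as stated.

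The paper does not use a curve of genus $\ge 2$ or Faltings at all. Instead it fixes an elliptic curve $E/\Q$ with $E(\Q)=0$ and surjective mod-$\ell$ representations for all $\ell$, and constructs a tower $\Q = K_0 \subset K_1 \subset \cdots$ in which \emph{each step $K_i/K_{i-1}$ is cyclic of prime degree $\ell_i$}, with the sequence $(\ell_i)$ chosen to contain every prime infinitely often. At each step one uses the relative Selmer group machinery of \cite{ds} (Propositions~\ref{goodp}, \ref{l7.13}, \ref{3.16}) to pick, via Chebotarev, a prime-degree cyclic $L/K_{i-1}$ with prescribed ramification making $\Sel(L/K_{i-1},E[\ell_i])=0$; then Proposition~\ref{ysel} upgrades rank-preservation to $E(L)=E(K_{i-1})$, because a prime-degree step has no intermediate fields so any new torsion-lifting point would generate all of $L$ and force ramification at forbidden primes. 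No finiteness input from Faltings is needed: the control mechanism is linear algebra on Selmer groups, and it works uniformly for every prime $\ell$ because the mod-$\ell$ image is always full.

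Your plan, by contrast, needs to exclude new points of $C_0$ of every degree $d$ dividing $[L_a:F]$, and $[L_a:F]$ is as large as roughly $n\varphi(n)$. Your finiteness mechanism — $\mathrm{Sym}^d C_0 \hookrightarrow \mathrm{Jac}(C_0)$ plus Faltings on subvarieties of abelian varieties — only yields a finite set of dangerous degree-$d$ points when $d$ is smaller than the gonality $\gamma$ (and smaller than $g$, and assuming the Jacobian is simple enough to rule out translated abelian subvarieties). Once $d \ge \gamma$, the gonal map $C_0 \to \pp^1$ alone produces infinitely many closed points of degree $\gamma$ over $F$, so the set of Kummer fields you must avoid is no longer a finite list that Chebotarev can dodge. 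Since a big field must contain number fields of every sufficiently divisible degree, some step of your construction will require $n \ge \gamma$, and at that point the argument as written has no content. Your two suggested repairs do not obviously help: a ``compatible system of curves of growing gonality used in parallel'' cannot be used because in the end you need a \emph{single} curve $C_0$ with $C_0(K)$ finite, and ``a more refined analysis of dangerous Kummer extensions of large degree'' is precisely the missing theorem. A secondary issue is that $F(\zeta_n,\sqrt[n]{a})$ has many intermediate fields and is generally not Galois of prime degree over $F$, so even identifying which subfields can occur as $F(P)$ is delicate; the paper sidesteps all of this by insisting every step be cyclic of prime degree, which is also what makes the rigidity statement (Corollary~\ref{cor}) and hence the uncountability argument clean.

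If you want to salvage the genus-$\ge 2$ route you would need a replacement for Faltings that controls points of $C_0$ of unbounded degree lying in controlled solvable extensions — which is essentially an open problem — whereas the elliptic-curve/Selmer route the paper takes has the needed uniformity in $\ell$ already built in. I'd recommend switching to the paper's mechanism: prime-degree cyclic steps, Selmer group vanishing via Chebotarev-chosen ramification, and an elliptic curve with trivial Mordell--Weil group over $\Q$.
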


More precisely, we will exhibit a non-empty set $\cS_0$ of elliptic curves over $\Q$ 
such that for every $E \in \cS_0$, there are uncountably many big fields $K$ such 
that $E(K)$ has exactly one rational point.

\noindent
\subsection*{Acknowledgments}
Theorem \ref{main} answers a question that was brought to our attention by Arno Fehm at 
the American Institute of Mathematics workshop ``Definability and decidability problems 
in number theory'', in May 2019.  
We are grateful to him for that and for additional helpful correspondence.
We also thank the referee for very helpful comments.

\section{$\L$-towers}

\begin{defn}
\label{Ltowerdef}
Suppose $K$ is a number field, and $\L := (\ell_1,\ell_2,\ldots)$ is a  
sequence of rational primes.  The sequence $\L$ can be either infinite or finite, 
and the primes $\ell_i$ need not be distinct.  
We call an extension $K_\infty/K$ an {\em $\L$-tower} 
if there is a sequence of number fields $K = K_0 \subset K_1 \subset K_2 \subset \cdots$ 
such that
\begin{enumerate}
\item
$K_\infty = \cup_i K_i$,
\item
$K_i/K_{i-1}$ is cyclic of degree $\ell_i$,
\item
for every $i>1$ there are primes $\p_i, \p'_i$ of degree $1$ of $K_i$, lying above the 
same prime of $K_{i-1}$, such that $\p_i$ ramifies in $K_{i+1}/K_i$, but $\p'_i$ does not. 
\end{enumerate}
\end{defn}

Note that if $K_\infty/K$ is an  $\L$-tower, then $[K_\infty:K] = \prod_i \ell_i$.

\begin{lem}
\label{chunk}
Suppose the chain of number fields $K_{i-1} \subset K_i \subset K_{i+1}$ is part of an 
$\L$-tower $K_\infty/K$, in the notation of Definition \ref{Ltowerdef}.  Then there is 
no Galois extension $F/K_{i-1}$ such that $FK_i = K_{i+1}$.
\end{lem}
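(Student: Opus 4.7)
My plan is to argue by contradiction: suppose there exists a Galois extension $F/K_{i-1}$ with $FK_i = K_{i+1}$, and extract a contradiction from the asymmetry between $\p_i$ and $\p'_i$ prescribed by condition (iii) of Definition \ref{Ltowerdef}.

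The crux is the standard fact that the compositum (inside a common field) of two Galois extensions is Galois. Since $K_i/K_{i-1}$ is cyclic of prime degree $\ell_i$, hence Galois, and we are assuming $F/K_{i-1}$ is Galois, this compositum fact would force $K_{i+1} = FK_i$ to be Galois over $K_{i-1}$. In a Galois extension of number fields, all primes lying above a given prime of the base share a common ramification index, residue degree, and multiplicity.

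Next I would analyze the prime $\p_{i-1}$ of $K_{i-1}$ lying below both $\p_i$ and $\p'_i$. Since $\p_i \neq \p'_i$, at least two primes of $K_i$ lie above $\p_{i-1}$; since both have absolute residue degree $1$, so does $\p_{i-1}$, and hence $f(\p_i/\p_{i-1}) = f(\p'_i/\p_{i-1}) = 1$. Combined with the fact that $K_i/K_{i-1}$ is cyclic of prime degree $\ell_i$, the uniform $(e,f,g)$ principle forces $\p_{i-1}$ to split completely in $K_i$; in particular $e(\p_i/\p_{i-1}) = e(\p'_i/\p_{i-1}) = 1$.

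To close the argument I would pick primes $\fQ$ and $\fQ'$ of $K_{i+1}$ lying above $\p_i$ and $\p'_i$ respectively. Because $[K_{i+1}:K_i] = \ell_{i+1}$ is prime, condition (iii) translates into $e(\fQ/\p_i) = \ell_{i+1}$ and $e(\fQ'/\p'_i) = 1$. Multiplicativity of ramification indices then gives $e(\fQ/\p_{i-1}) = \ell_{i+1}$ while $e(\fQ'/\p_{i-1}) = 1$. Since both $\fQ$ and $\fQ'$ sit above $\p_{i-1}$ in the putative Galois extension $K_{i+1}/K_{i-1}$, these indices must coincide, contradicting $\ell_{i+1} > 1$. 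The only mildly subtle step is the complete splitting of $\p_{i-1}$ in $K_i$; everything else is a direct application of the compositum-of-Galois-is-Galois principle together with the uniformity of $e$ in Galois extensions.
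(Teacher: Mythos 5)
Your proof is correct, and it reaches the contradiction by a route that is close in spirit to the paper's but packaged differently. The paper never explicitly observes that $K_{i+1}/K_{i-1}$ would be Galois; instead it works locally in the compositum diagram, using that $\q$ splits completely in $K_i/K_{i-1}$ to transfer the question ``does $\p$ (resp.\ $\p'$) ramify in $FK_i/K_i$?'' down to the single question ``does $\q$ ramify in $F/K_{i-1}$?'', concluding that $\p$ and $\p'$ must ramify or not ramify together in $FK_i/K_i$ --- which contradicts what Definition~\ref{Ltowerdef}(iii) imposes on $K_{i+1}$. You instead take the top-down view: compositum of Galois is Galois, so $K_{i+1}/K_{i-1}$ would be Galois, and then the uniformity of ramification indices over $\q$ in a Galois extension clashes with the computed values $e(\fQ/\q) = \ell_{i+1}$ and $e(\fQ'/\q) = 1$. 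Both hinge on the same two pillars --- complete splitting of $\q$ in $K_i/K_{i-1}$ (which you justify carefully from the degree-one hypothesis) and the prescribed ramification asymmetry between $\p_i$ and $\p'_i$ --- but your version substitutes the textbook fact about uniformity of $e$ in Galois extensions for the paper's compositum-level local analysis, which arguably makes the contradiction a little more immediate. One small remark: the paper's phrasing never needs to name the primes $\fQ, \fQ'$ of $K_{i+1}$, since it compares ramification of $\p, \p'$ in $K_{i+1}/K_i$ directly, but your introduction of them is harmless and makes the multiplicativity step transparent.
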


\begin{proof}
Suppose $F/K_{i-1}$ is Galois.  By Definition \ref{Ltowerdef} 
there are primes $\p, \p'$ of $K_i$, lying above the 
same prime $\q$ of $K_{i-1}$, such that $\p$ ramifies in $K_{i+1}/K_i$, but $\p'$ does not. 
Note that since $K_i/K_{i-1}$ is cyclic of prime degree and $\p \ne \p'$, $\q$ must split completely in 
$K_i/K_{i-1}$.  Thus the diagram
$$
\xymatrix@R=10pt@C=0pt{
&FK_i \\
F\ar@{-}[ur] && K_i  \ar@{-}[ul] \\
& K_{i-1} \ar@{-}[ur] \ar@{-}[ul]
}
$$
shows that
\begin{multline*}
\text{$\p$ ramifies in $FK_i/K_i \iff \q$ ramifies in $F/K_{i-1}$} \\
   \text{$\iff \p'$ ramifies in $FK_i/K_i$.}
\end{multline*}
Thus $FK_i \ne K_{i+1}$, and the lemma follows.
\end{proof}

\begin{lem}
\label{towerlem}
Suppose $\L$ is a sequence of rational primes, $K$ is a number field, and 
$K_\infty = \cup K_i$ is an $\L$-tower over $K$ as in Definition \ref{Ltowerdef}.  
Then:
\begin{enumerate}
\item
for every $i \ge 0$, the maximal abelian extension of $K_i$ in $K_\infty$ is $K_{i+1}$,
\item
if $T$ is a Galois extension of $K$ (possibly of infinite degree) and $K_1 \cap T = K$, 
then $K_i \cap T = K$ for every $i \ge 1$.
\end{enumerate}
\end{lem}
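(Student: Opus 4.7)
For part (i), my plan is a minimal-counterexample argument that feeds directly into Lemma~\ref{chunk}. Given an abelian extension $M$ of $K_i$ inside $K_\infty$, I would pick the smallest $j$ with $M\subset K_j$ and aim to show $j\le i+1$. If instead $j\ge i+2$, then minimality forces $M\not\subset K_{j-1}$, so $MK_{j-1}=K_j$ since $[K_j:K_{j-1}]$ is prime. Because $K_{j-2}\supset K_i$ and $M/K_i$ is Galois (being abelian), the compositum $MK_{j-2}$ is Galois over $K_{j-2}$ and satisfies $(MK_{j-2})K_{j-1}=MK_{j-1}=K_j$, directly contradicting Lemma~\ref{chunk} applied to the chunk $K_{j-2}\subset K_{j-1}\subset K_j$. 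The reverse inclusion in (i) is automatic since $K_{i+1}/K_i$ is cyclic, hence abelian.

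For part (ii), I would induct on $i$, with the base case $i=1$ being the hypothesis. Assume $K_i\cap T=K$ and set $F:=K_{i+1}\cap T$; then $F\cap K_i=T\cap K_i=K$ by the inductive hypothesis. Suppose for contradiction that $F\ne K$. Then $F\not\subset K_i$, and primality of $[K_{i+1}:K_i]$ gives $FK_i=K_{i+1}$. The plan is to first upgrade $F$ to be Galois over $K$ and then apply Lemma~\ref{chunk} to the chunk $K_{i-1}\subset K_i\subset K_{i+1}$.

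To upgrade $F$, let $\tilde F$ be the Galois closure of $F/K$ in $\bar\Q$. Since $T/K$ is Galois and $F\subset T$, we have $\tilde F\subset T$. Hence $\tilde F\cap K_{i+1}\subset T\cap K_{i+1}=F$, forcing $\tilde F\cap K_{i+1}=F$, and a fortiori $\tilde F\cap K_i=K$. Restriction therefore gives an isomorphism $\Gal(\tilde FK_i/K_i)\cong\Gal(\tilde F/K)$ under which $\Gal(\tilde FK_i/K_{i+1})$ corresponds to $\Gal(\tilde F/F)$. Since $K_{i+1}/K_i$ is Galois (being cyclic), the first subgroup is normal, whence $\Gal(\tilde F/F)$ is normal in $\Gal(\tilde F/K)$; this precisely says $F/K$ is itself Galois, so $F=\tilde F$.

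With $F/K$ Galois and $FK_i=K_{i+1}$, the field $G:=FK_{i-1}$ is Galois over $K_{i-1}$ and satisfies $GK_i=FK_i=K_{i+1}$, in direct conflict with Lemma~\ref{chunk}. I expect the main obstacle to be precisely this normality step---promoting $F$ itself, rather than merely its Galois closure, to be Galois over $K$. Once that promotion is in hand, Lemma~\ref{chunk} delivers the contradiction immediately, and the induction completes.
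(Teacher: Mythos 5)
Your proof is correct and follows essentially the same path as the paper's: both parts reduce to Lemma \ref{chunk}, and in part (ii) your $F = K_{i+1}\cap T$ is exactly the fixed field $T^H$ (with $H = \Gal(K_iT/K_{i+1})$) that the paper uses, the only difference being that you obtain the normality of $F/K$ via the Galois closure inside $T$ while the paper deduces it directly from $H$ being normal in $\Gal(T/K)$. One small point in part (i): restrict attention to $M$ finite over $K_i$ (which suffices, since an abelian extension is the union of its finite subextensions) so that a smallest index $j$ with $M\subset K_j$ actually exists.
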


\begin{proof}
Suppose that there is an abelian extension $L$ of $K_i$ contained in $K_\infty$ and 
properly containing $K_{i+1}$.  Without loss of generality we may assume that 
$[L:K_{i+1}]$ is  prime.  Choose $\alpha \in K_\infty$ such that $L = K_{i+1}(\alpha)$, 
and let $n$ be such that $\alpha \in K_{n+1}$ but $\alpha \notin K_n$.  
Since $L \supsetneq K_{i+1}$ we have $n > i$.

Since $\alpha \notin K_n$ and $[K_{n+1}:K_n]$ is prime, we have $K_{n+1} = K_n(\alpha) = LK_n$.
This contradicts Lemma \ref{chunk} applied with $i = n$ and $F = LK_{n-1}$.
This contradiction proves (i).

Now suppose $T/K$ is Galois with group $G$, and $K_1 \cap T = K$.  We will show by induction that 
$K_i \cap T = K$ for every $i$.  Suppose $K_i \cap T = K$, and consider the diagram
$$
\xymatrix{
K_{i+1} && K_iT \\
K_i \ar@{-}^{G}[urr]\ar@{-}^{\ell_{i+1}}[u] && T \ar@{-}[u] \\
K \ar@{-}^{G}[urr]\ar@{-}[u]
}
$$
We either have $K_{i+1} \subset K_iT$ or $K_{i+1} \cap K_iT = K_i$.  
Suppose first that $K_{i+1} \subset K_iT$.  Let $H := \Gal(K_i T/K_{i+1}) \subset G$, 
so $T^H/K$ is abelian of degree $\ell_{i+1}$. 
Since $K_i \cap T = K$, we have $K_{i+1} = T^H K_i$.  
This contradicts Lemma \ref{chunk} applied with $F = T^H K_{i-1}$, 
so we conclude that $K_{i+1} \cap K_iT = K_i$.  Thus
$$
K_{i+1} \cap T = K_{i+1} \cap (K_iT \cap T) = (K_{i+1} \cap K_iT) \cap T = K_i \cap T = K.
$$
By induction this completes the proof of (ii).
\end{proof}

\begin{cor}
\label{cor}
Suppose $K$ is a number field and $K_\infty/K$ is an algebraic extension.  
There is at most one sequence of rational primes $\L$ and one sequence 
of fields $(K_0,K_1,\ldots)$ that exhibits $K_\infty/K$ as an $\L$-tower.
\end{cor}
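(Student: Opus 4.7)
The plan is to derive the corollary directly from Lemma \ref{towerlem}(i), which pins down each $K_{i+1}$ intrinsically from $K_i$ and $K_\infty$. Specifically, that lemma says $K_{i+1}$ is the maximal abelian extension of $K_i$ contained in $K_\infty$, so once $K_i$ is known (as a subfield of $K_\infty$), $K_{i+1}$ is uniquely determined with no reference to the tower data.

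The argument is a short induction. Suppose $(K_0, K_1, \ldots)$ with primes $\L = (\ell_1, \ell_2, \ldots)$ and $(K_0', K_1', \ldots)$ with primes $\L' = (\ell_1', \ell_2', \ldots)$ both exhibit $K_\infty/K$ as an $\L$-tower. By Definition \ref{Ltowerdef} we have $K_0 = K_0' = K$. Assume inductively that $K_i = K_i'$ as subfields of $K_\infty$. Applying Lemma \ref{towerlem}(i) to each tower, both $K_{i+1}$ and $K_{i+1}'$ equal the (unique) maximal abelian extension of $K_i$ inside $K_\infty$; hence $K_{i+1} = K_{i+1}'$. This shows the field sequences coincide, and then $\ell_{i+1} = [K_{i+1}:K_i] = [K_{i+1}':K_i'] = \ell_{i+1}'$, so $\L = \L'$ as well.

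There is essentially no obstacle once Lemma \ref{towerlem}(i) is in hand; the only point to check is that the maximal abelian subextension of $K_\infty/K_i$ is well defined as a subfield (the compositum of all abelian subextensions is again abelian), which is standard. Thus the uniqueness of both $\L$ and $(K_i)$ follows by induction on $i$.
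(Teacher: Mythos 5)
Your argument is correct and is essentially identical to the paper's proof: both apply Lemma \ref{towerlem}(i) inductively to pin down $K_{i+1}$ as the maximal abelian subextension of $K_\infty/K_i$, then recover $\L$ from $\ell_i=[K_i:K_{i-1}]$. The paper states this more tersely, but there is no difference in substance.
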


\begin{proof}
Suppose $K_\infty/K$ is an $\L$-tower for some sequence $\L$.  Applying Lemma \ref{towerlem}(i) 
inductively shows that the sequence of fields $(K_0,K_1,\ldots)$ is uniquely determined, 
and then the fact that $\ell_i = [K_{i}:K_{i-1}]$ determines $\L$.
\end{proof}

\section{Selmer groups}
If $F$ is a perfect field, 
$G_F$ will denote its absolute Galois group $\Gal(\bar{F}/F)$.

For this section fix a number field $K$, an elliptic curve $E$ defined over $K$, and 
a rational prime $\ell$ such that 
\begin{equation}
\label{ess}
\text{the natural map $G_K \onto \Aut(E[\ell]) \cong \GL_2(\Fl)$ is surjective.}
\end{equation}
Let $L$ be either $K$ itself or a cyclic extension of $K$ of 
degree $\ell$.  

For every place $v$ of $K$, let $K_v$ denote the completion of $K$ at $v$, 
let $L_v$ denote the completion of $L$ at some place above $v$, and 
let $\H_\ell(L_v/K_v)$ be the ``local condition'' subspace of $H^1(K_v,E[\ell])$ defined in 
\cite[Definition 7.1]{ds}.  We will not repeat the definition here, but the following 
four propositions list all the properties of the subspaces $\H_\ell(L_v/K_v)$ that we need. 

If $v$ is nonarchimedean with residue characteristic different from $\ell$, and
$E$ has good reduction at $v$, let $K_v^\ur$ denote the maximal unramified extension of 
$K_v$ and 
$$
\Hu(K_v,E[\ell]) := H^1(K_v^\ur/K_v,E[\ell]) \subset H^1(K_v,E[\ell]).
$$

\begin{prop}
\label{hl}
\begin{enumerate}
\item
$\dim_{\Fl} \H_\ell(L_v/K_v) = \frac{1}{2} \dim_{\Fl}H^1(K_v,E[\ell])$.
\item
If $L_v = K_v$ then $\H_\ell(L_v/K_v)$ is the image of the Kummer map 
$
E(K_v)/\ell E(K_v) \to H^1(K_v,E[\ell]).
$
\end{enumerate}
\end{prop}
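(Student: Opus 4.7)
My plan is to prove Proposition \ref{hl} essentially by unwinding the definition of $\H_\ell(L_v/K_v)$ given in \cite[Definition 7.1]{ds}. Both assertions are formal properties of that local condition, and the verification amounts to tracing through the definition and invoking standard facts from local duality.

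For part (ii), I would specialize the recipe in \cite[Definition 7.1]{ds} to the case $L_v = K_v$. That definition is engineered precisely so that in this trivial case the resulting subspace of $H^1(K_v, E[\ell])$ coincides with the classical Selmer local condition, namely the image of the local Kummer map $E(K_v)/\ell E(K_v) \to H^1(K_v, E[\ell])$. I would cite the corresponding statement in \cite{ds} that records this explicitly.

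For part (i), the underlying principle is that $\H_\ell(L_v/K_v)$ is constructed so as to be a maximal isotropic (Lagrangian) subspace with respect to the local Tate pairing
$$
H^1(K_v, E[\ell]) \times H^1(K_v, E[\ell]) \to \Fl,
$$
obtained from local Tate duality together with the Weil pairing $E[\ell] \otimes E[\ell] \to \bmu_\ell$. Since this pairing is non-degenerate and alternating, any Lagrangian subspace must have $\Fl$-dimension equal to $\tfrac{1}{2} \dim_{\Fl} H^1(K_v, E[\ell])$. The dimension statement therefore reduces to the self-duality of $\H_\ell(L_v/K_v)$ under the local Tate pairing, which I would quote from \cite{ds}.

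The main obstacle is not mathematical but expository: one needs to match conventions with \cite{ds} and pinpoint the exact propositions there establishing (a) the Lagrangian property of $\H_\ell(L_v/K_v)$ with respect to the local Tate pairing, and (b) its agreement with the Kummer image in the case $L_v = K_v$. Once those are identified, the present proposition is essentially a citation, with no new computation required.
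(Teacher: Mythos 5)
Your proposal is correct and takes essentially the same route as the paper: part (ii) by reading off \cite[Definition 7.1]{ds} in the case $L_v = K_v$, and part (i) from the fact that $\H_\ell(L_v/K_v)$ is a maximal isotropic subspace for the local Tate pairing (the paper cites \cite[Proposition 4.4]{alc} for this). One small caution: you describe the pairing as alternating, whereas what the dimension count really needs is that the pairing is nondegenerate and admits a maximal isotropic subspace of half the dimension (e.g.\ the Kummer image), so quoting the cited Lagrangian statement directly is the cleaner move.
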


\begin{proof}
Assertion (i) holds because $\H_\ell(L_v/K_v)$ is a maximal isotropic subspace of 
$H^1(K_v,E[\ell])$ for the local Tate pairing (see for example \cite[Proposition 4.4]{alc}).
Assertion (ii) is explained in \cite[Definition 7.1]{ds}.
\end{proof}

\begin{prop}
\label{hl1}
If $v \nmid \ell\infty$, $E$ has good reduction at $v$, and $L_v/K_v$ is ramified, then
$\H_\ell(L_v/K_v) \cap \Hu(K_v,E[\ell]) = 0$.
\end{prop}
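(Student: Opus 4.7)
The plan is to analyze both $\H_\ell(L_v/K_v)$ and $\Hu(K_v, E[\ell])$ as maximal isotropic Lagrangians in the symplectic space $H^1(K_v, E[\ell])$ (with respect to the local Tate pairing), and to use the tame ramification of $L_v/K_v$ to show they are transverse.

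First I would observe that since $v \nmid \ell\infty$ and $L_v/K_v$ is ramified cyclic of degree $\ell$, the extension is necessarily tame, hence totally ramified: inertia $I_v \subset G_{K_v}$ surjects onto $\Gal(L_v/K_v) \cong \Z/\ell$, and the residue fields of $K_v$ and $L_v$ coincide. Combined with good reduction of $E$ at $v$, this gives $E[\ell]^{G_{L_v}} = E[\ell]^{G_{K_v}}$, a module on which $\Gal(L_v/K_v)$ acts trivially.

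Next I would unwind \cite[Definition 7.1]{ds}. The relevant feature for ramified $L_v/K_v$ is that $\H_\ell(L_v/K_v)$ can be identified with the image of the inflation map $H^1(\Gal(L_v/K_v), E[\ell]^{G_{L_v}}) \hookto H^1(K_v, E[\ell])$ -- equivalently, the kernel of restriction to $H^1(L_v, E[\ell])$ -- whose dimension is $\dim E[\ell]^{G_{K_v}} = \tfrac12 \dim H^1(K_v, E[\ell])$, matching Proposition \ref{hl}(i) by the local Euler characteristic formula together with Weil self-duality of $E[\ell]$. Concretely, any $c \in \H_\ell(L_v/K_v)$ is then the inflation of a homomorphism $\varphi : \Gal(L_v/K_v) \to E[\ell]^{G_{K_v}}$ along the quotient $G_{K_v} \onto \Gal(L_v/K_v)$. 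If additionally $c \in \Hu(K_v, E[\ell])$, then $c|_{I_v} = 0$, which forces $\varphi$ to vanish on the image of $I_v$ in $\Gal(L_v/K_v)$; by total ramification this image is everything, so $\varphi = 0$ and hence $c = 0$.

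The main obstacle is verifying that the specific definition in \cite[Definition 7.1]{ds} really matches the inflation/kernel-of-restriction description used above, especially in light of Proposition \ref{hl}(ii), which gives the Kummer image as the defining description when $L_v = K_v$. A close reading of \cite{ds} should either produce a uniform formulation from which both are visibly special cases, or permit a direct two-case verification showing that in the ramified case $\H_\ell(L_v/K_v)$ is forced (by maximal isotropy and the dimension count) to coincide with the inflation image.
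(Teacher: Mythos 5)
Your approach hinges entirely on the unproved claim that for ramified $L_v/K_v$ one has $\H_\ell(L_v/K_v) = \ker\bigl(\Res\colon H^1(K_v,E[\ell]) \to H^1(L_v,E[\ell])\bigr)$, and this identification is in fact \emph{false} in general. You cannot extract it from \cite[Definition 7.1]{ds}, and it is not ``forced by maximal isotropy and the dimension count'': there are many maximal isotropic subspaces of the correct dimension, so that fallback argument has no force.

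Worse, for $\ell = 2$ the kernel of restriction is not even isotropic, so it cannot equal $\H_2(L_v/K_v)$ (Proposition~\ref{hl}(i) says $\H_\ell$ is a maximal isotropic subspace). Concretely, take $v\nmid 2\infty$ with good reduction and $G_{K_v}$ acting trivially on $E[2]$, so $E[2]\cong \mu_2\oplus\mu_2$ and $H^1(K_v,E[2])\cong (K_v^\times/(K_v^\times)^2)^{2}$; write $L_v=K_v(\sqrt d)$ with $d$ of odd valuation. Then $\ker(\Res)$ is spanned by $(d,1)$ and $(1,d)$, and the Tate pairing of these two classes is the Hilbert symbol $(d,d)_v=(d,-1)_v$, which is nontrivial whenever $\bN v\equiv 3\pmod 4$. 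So $\ker(\Res)$ is not isotropic there. (For odd $\ell$ the cup product on $H^1$ of a cyclic $\ell$-group does vanish, so the isotropy obstruction disappears, but the identification with $\H_\ell$ is still unverified.) In reality, in \cite{ds} the local condition for ramified $L_v/K_v$ is essentially the Kummer image attached to the twist of $E$ by the character cutting out $L_v/K_v$ --- a genuinely different subspace --- and the transversality with $\Hu(K_v,E[\ell])$ is \cite[Proposition 7.8(ii)]{ds}, which is exactly what the paper cites; the paper offers no independent argument, so there is no alternative route in the paper to compare yours against. Your overall strategy (exploit total ramification to force triviality) is the right instinct, but without the actual definition from \cite{ds} the key step is missing, and for $\ell=2$ the step you propose is provably incorrect.
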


\begin{proof}
This is \cite[Proposition 7.8(ii)]{ds}.
\end{proof}

\begin{prop}
\label{hl2}
If $v \nmid \ell\infty$, $L_v/K_v$ is unramified, $E$ has good reduction at $v$, 
and $\phi_v \in \Gal(K_v^\ur/K)$ is the Frobenius generator, then:
\begin{enumerate}
\item
$\H_\ell(L_v/K_v) = \Hu(K_v,E[\ell])$,
\item
$\dim_{\Fl} \H_\ell(L_v/K_v) = \dim_{\Fl} (E[\ell]^{\phi_v=1})$,
\item
the map $\Hu(K_v,E[\ell]) \to E[\ell]/(\phi_v-1)E[\ell]$ given by evaluating 
cocycles at $\phi_v$ is a well-defined isomorphism.
\end{enumerate}
\end{prop}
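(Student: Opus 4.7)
The plan is to establish (i) first by reconciling the local condition $\H_\ell(L_v/K_v)$ of \cite[Definition 7.1]{ds} with the classical unramified subspace, and then to derive (ii) and (iii) from standard facts about cohomology of $\Gal(K_v^\ur/K_v)\cong\widehat{\Z}$.

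For (i), under the hypotheses $v\nmid\ell\infty$, good reduction at $v$, and $L_v/K_v$ unramified, I would inspect \cite[Definition 7.1]{ds} and verify that the defining condition specializes to precisely the unramified condition, i.e.\ $\H_\ell(L_v/K_v)\subseteq\Hu(K_v,E[\ell])$. Since Proposition~\ref{hl}(i) forces $\dim_{\Fl}\H_\ell(L_v/K_v)=\tfrac{1}{2}\dim_{\Fl}H^1(K_v,E[\ell])$, and the standard local-duality computation gives the same dimension for $\Hu(K_v,E[\ell])$ (it is maximal isotropic for the local Tate pairing at a place of good reduction outside $\ell$), this single inclusion suffices to upgrade to equality.

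For (iii), since $v\nmid\ell$ and $E$ has good reduction at $v$, the inertia subgroup of $G_{K_v}$ acts trivially on $E[\ell]$, so $E[\ell]$ is naturally a discrete $\Gal(K_v^\ur/K_v)$-module. Because $\Gal(K_v^\ur/K_v)\cong\widehat{\Z}$ is topologically generated by $\phi_v$, the general fact that for any finite discrete $\widehat{\Z}$-module $M$ evaluation-at-generator gives an isomorphism
$$
H^1(\widehat{\Z},M)\;\xrightarrow{\;\sim\;}\;M/(\phi_v-1)M,\qquad c\mapsto c(\phi_v),
$$
applied to $M=E[\ell]$ yields (iii). Part (ii) is then immediate: the endomorphism $\phi_v-1$ of the finite $\Fl$-vector space $E[\ell]$ has kernel and cokernel of equal $\Fl$-dimension, so $\dim_{\Fl}E[\ell]^{\phi_v=1}=\dim_{\Fl}E[\ell]/(\phi_v-1)E[\ell]$, which by (iii) equals $\dim_{\Fl}\Hu(K_v,E[\ell])$, and by (i) equals $\dim_{\Fl}\H_\ell(L_v/K_v)$.

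The only non-bookkeeping step is the verification in (i) that the abstractly defined local subspace from \cite{ds} agrees with the unramified one under these hypotheses; I expect this to be immediate from the definition there (or to be stated explicitly as a remark in \cite{ds} which one can cite directly), so the proof should be short and primarily a matter of invoking the correct references.
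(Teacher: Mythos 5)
The paper's own proof is purely a citation: assertions (i) and (iii) are \cite[Lemma 7.3]{ds} and assertion (ii) is \cite[Lemma 7.2]{ds}, so you are taking a genuinely different route by reproving from scratch. Your derivation of (iii) from the standard fact that evaluation at a topological generator gives $H^1(\widehat{\Z},M)\cong M/(\phi_v-1)M$ (using that inertia acts trivially on $E[\ell]$ since $v\nmid\ell$ and $E$ has good reduction) is correct, and your deduction of (ii) from (i) and (iii) via $\dim_{\Fl}\ker(\phi_v-1)=\dim_{\Fl}\coker(\phi_v-1)$ on the finite $\Fl$-space $E[\ell]$ is also fine. The dimension-count trick for (i) --- reducing equality to a single inclusion $\H_\ell(L_v/K_v)\subseteq\Hu(K_v,E[\ell])$ because both subspaces are maximal isotropic, hence of dimension $\tfrac{1}{2}\dim H^1(K_v,E[\ell])$, by Proposition~\ref{hl}(i) and local duality --- is a nice observation that the paper does not make explicit.

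However, there is a real gap in (i): the inclusion $\H_\ell(L_v/K_v)\subseteq\Hu(K_v,E[\ell])$ is exactly the substantive content that \cite[Lemma 7.3]{ds} proves, and you do not establish it; you only say you ``would inspect'' the definition and ``expect this to be immediate.'' It is not purely notational, because when $L_v\neq K_v$ the local condition in \cite[Definition 7.1]{ds} is defined in terms of $E(L_v)$ (and a comparison between $H^1(K_v,\cdot)$ and $H^1(L_v,\cdot)$), not simply as the Kummer image of $E(K_v)$, so one must actually carry out a small unramified-cohomology computation to see that the resulting subspace lands in $\Hu(K_v,E[\ell])$. Note also that the analogous statement is genuinely false without the good-reduction hypothesis --- compare Proposition~\ref{3.5}, where extra hypotheses are needed even to conclude $\H_\ell(L_v/K_v)=\H_\ell(K_v/K_v)$ --- so the verification cannot be waved away as formal. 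To make your argument complete you would either carry out that computation or, as the paper does, cite \cite[Lemma 7.3]{ds} directly for the inclusion (or the full equality), after which your dimension count becomes superfluous.
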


\begin{proof}
Assertions (i) and (iii) are \cite[Lemma 7.3]{ds}, and (ii) is \cite[Lemma 7.2]{ds}.
\end{proof}

Let $\Delta_E$ denote the discriminant of some Weierstrass model of $E$.

\begin{prop}
\label{3.5}
If $\ell = 2$, $v \nmid 2\infty$, $E$ has multiplicative reduction at $v$, 
$\ord_v(\Delta_E)$ is odd, and 
$L_v/K_v$ is unramified, then $\H_\ell(L_v/K_v) = \H_\ell(K_v/K_v)$.
\end{prop}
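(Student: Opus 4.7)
\smallskip
\noindent
\textbf{Proof plan.} The strategy is to show that both $\H_2(K_v/K_v)$ and $\H_2(L_v/K_v)$ coincide with the unramified subspace $\Hu(K_v,E[2]) \subset H^1(K_v,E[2])$. By Proposition \ref{hl}(i), both subspaces already have the same $\F_2$-dimension $\frac{1}{2}\dim H^1(K_v,E[2])$, so the work is to identify each of them with $\Hu(K_v,E[2])$.

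First I would treat $\H_2(K_v/K_v)$. By Proposition \ref{hl}(ii) this equals the image of the Kummer map $E(K_v)/2E(K_v) \hookrightarrow H^1(K_v,E[2])$. Using the Tate uniformization $E \cong \G_m/q_E^\Z$ over $K_v^{\ur}$ with $\ord_v(q_E)=\ord_v(\Delta_E)$ odd (after, if needed, an unramified quadratic twist to reduce to the split case), a direct computation identifies $E(K_v)/2E(K_v)$ with a quotient of $K_v^*/(K_v^*)^2$ in which the class of a uniformizer is killed by the relation coming from $q_E$. Hence the Kummer image is represented by unit classes, and is therefore contained in $\Hu(K_v,E[2])$.

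Next I would compute $\dim_{\F_2}\Hu(K_v,E[2]) = \dim_{\F_2} E[2]^{I_v}$. The exact sequence $0 \to \mu_2 \to E[2] \to \Z/2 \to 0$ arising from the Tate parametrization, together with the fact that $q_E^{1/2}$ generates a ramified quadratic extension (since $\ord_v(q_E)$ is odd), shows that inertia acts nontrivially on the quotient, so $E[2]^{I_v} = \mu_2$ is $1$-dimensional. Matching this with the dimension of the Kummer image from Proposition \ref{hl}(i) gives $\H_2(K_v/K_v) = \Hu(K_v,E[2])$.

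Finally, for the second equality, I would revisit the definition of $\H_2(L_v/K_v)$ in \cite[Definition 7.1]{ds}. Because $L_v/K_v$ is unramified, inertia-invariants and unramified cohomology are unchanged in passing from $K_v$ to $L_v$, and the definition should give $\H_2(L_v/K_v) \subset \Hu(K_v,E[2])$. Combined with the dimension count from Proposition \ref{hl}(i), this yields $\H_2(L_v/K_v) = \Hu(K_v,E[2])$, completing the proof. The main obstacle is this last step: the definition in \cite{ds} is tailored to allow ramification in $L_v/K_v$, and one must carefully verify that in the unramified case it collapses to the unramified subspace in the multiplicative reduction setting, a collapse that is not automatic from Proposition \ref{hl2}(i) since the latter assumes good reduction.
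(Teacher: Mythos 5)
The paper's own ``proof'' is simply the citation ``This is \cite[Lemma 2.10(iii)]{H10}'', so you are attempting to reconstruct the argument that lives in that reference rather than following the paper. Your overall plan --- show that both sides equal $\Hu(K_v,E[2])$, with the inclusions following from Tate uniformization and the equalities from the dimension count in Proposition~\ref{hl}(i) --- is the right one, and the split-multiplicative computation you sketch for the first equality is essentially correct. A cleaner way to see the containment $\H_2(K_v/K_v)\subset\Hu(K_v,E[2])$ (which also handles the nonsplit case you currently wave away with ``an unramified quadratic twist to reduce to the split case'') is to note that over $K_v^{\ur}$ the curve is split Tate, $(K_v^{\ur})^\times$ modulo squares is generated by a uniformizer, and since $\ord_v(q_E)=\ord_v(\Delta_E)$ is odd the class of $q_E$ kills that generator; hence $E(K_v^{\ur})=(K_v^{\ur})^\times/q_E^\Z$ is $2$-divisible, so the Kummer image of $E(K_v)/2E(K_v)$ dies upon restriction to $G_{K_v^{\ur}}$ and therefore lies in $\Hu(K_v,E[2])$. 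Your reduction-to-split remark does not work as stated, because twisting $E$ changes the $K_v$-rational points and hence the Kummer map, so it is not a literal reduction; the unramified-base-change argument above sidesteps this.

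For the second equality, the gap you flag is real but closes easily once you recall (from \cite[Lemma~8.4]{ds}, or directly from \cite[Definition~7.1]{ds} in the quadratic case) that $\H_2(L_v/K_v)$ is the Kummer image of $E^{L_v/K_v}(K_v)/2E^{L_v/K_v}(K_v)$, where $E^{L_v/K_v}$ is the quadratic twist by the character cutting out $L_v$. Since $L_v/K_v$ is \emph{unramified}, this character is unramified, so $E^{L_v/K_v}$ again has multiplicative reduction at $v$ with $\ord_v(\Delta_{E^{L_v/K_v}})=\ord_v(\Delta_E)$ odd (twisting by a unit multiplies $\Delta$ by a unit to the sixth power), and $E^{L_v/K_v}[2]=E[2]$ as $G_{K_v}$-modules. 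Applying the first-equality argument to $E^{L_v/K_v}$ in place of $E$ then gives $\H_2(L_v/K_v)=\Hu(K_v,E[2])$ directly. With these two repairs your plan becomes a complete proof; it is a genuinely self-contained alternative to the paper's citation.
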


\begin{proof}
This is \cite[Lemma 2.10(iii)]{H10}.
\end{proof}

\begin{defn}
\label{seldef}
The {\em relative Selmer group} $\Sel(L/K,E[\ell])$ is the subset of $H^1(K,E[\ell])$ defined by
$$
\Sel(L/K,E[\ell]) := \{c \in H^1(K,E[\ell]) : \text{$\loc_v(c) \in \H_\ell(L_v/K_v)$ for every $v$}\}
$$
where $\loc_v : H^1(K,E[\ell]) \to H^1(K_v,E[\ell])$ is the localization map at $v$.  
\end{defn}

When $L = K$, Proposition \ref{hl}(ii) shows that 
$\Sel(L/K,E[\ell])$ is the standard $\ell$-Selmer group of $E/K$, and we denote it by 
$\Sel(K,E[\ell])$.

When $\ell = [L:K] = 2$, $\Sel(L/K,E[\ell])$ is the standard $2$-Selmer group of the 
quadratic twist $E^L/K$ (see \cite[Lemma 8.4]{ds}).

These relative Selmer groups are useful to us because of the following proposition.

\begin{prop}
\label{ysel}
\textup{(i)} If $\Sel(L/K,E[\ell]) = 0$ then $\rk\,E(L) = \rk\,E(K)$.
\begin{enumerate}
\setcounter{enumi}{1}
\item
If $\rk\,E(L) = \rk\,E(K)$ and $L/K$ is ramified at two primes of good reduction 
for $E$ with different residue characteristics, then $E(L) = E(K)$.   
\end{enumerate}
\end{prop}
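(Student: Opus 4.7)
The plan is to dispatch (i) by invoking the descent-theoretic content of \cite{ds} underlying the definition of $\Sel(L/K,E[\ell])$, and to prove (ii) by a direct ramification argument on torsion. The main obstacle is not any individual computation but the identification step in (i), which rests on unpacking the definition of $\H_\ell(L_v/K_v)$ from \cite{ds} and checking that $\Sel(L/K,E[\ell])$ really tracks the rank growth from $K$ to $L$; in practice, for the purposes of this paper, (i) is a citation.

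For (i), note first that the case $L = K$ is vacuous, so one can assume $[L:K]=\ell$. The point is that $\Sel(L/K,E[\ell])$ is engineered to control the non-trivial $\Gal(L/K)$-character part of $E(L)\otimes\Q_\ell$. In the model case $\ell=2$, \cite[Lemma 8.4]{ds} identifies $\Sel(L/K,E[2])$ with the usual $2$-Selmer group of the quadratic twist $E^L/K$; its vanishing forces $\rk E^L(K)=0$, and the formula $\rk E(L)=\rk E(K)+\rk E^L(K)$ then gives the conclusion. For odd $\ell$ the same Kummer-theoretic comparison (via inflation--restriction from $\Gal(L/K)$) identifies $\Sel(L/K,E[\ell])$ with an object that controls the rank difference $\rk E(L)-\rk E(K)$, so its triviality forces this difference to be zero.

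For (ii), the rank hypothesis makes $E(L)/E(K)$ a finite group, so it suffices to annihilate all of its torsion. Let $\q_1,\q_2$ be the two ramified primes of good reduction with distinct residue characteristics $p_1\neq p_2$. Since $L/K$ is cyclic of prime degree $\ell$ and ramifies at each $\q_i$, the extension is totally ramified there, so there is a unique prime $\p_i$ of $L$ above $\q_i$ with the same residue field, and a generator $\sigma$ of $\Gal(L/K)$ acts trivially on that residue field. Given $\bar P\in E(L)/E(K)$ of order $m$ coprime to $p_1$, we have $mP\in E(K)$, hence $(\sigma-1)P\in E(L)[m]$; reducing modulo $\p_1$ gives $(\sigma-1)\tilde P=0$, and since reduction is injective on prime-to-$p_1$ torsion at a prime of good reduction, $(\sigma-1)P=0$. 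Then $P\in E(L)^{\langle\sigma\rangle}=E(K)$, contradicting $\bar P\neq 0$. Hence $E(L)/E(K)$ has no prime-to-$p_1$ torsion; symmetrically it has no prime-to-$p_2$ torsion, and since $p_1\neq p_2$ the group is trivial.
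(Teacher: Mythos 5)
Your proof is correct, but the two halves fare differently against the paper. For (i), you are essentially reproducing the paper's move: the paper cites \cite[Proposition 8.8]{ds}, which gives the explicit inequality $\rk E(K)\le \rk E(L)\le \rk E(K)+(\ell-1)\dim_{\F_\ell}\Sel(L/K,E[\ell])$, from which (i) is immediate. Your $\ell=2$ reduction via quadratic twists (using \cite[Lemma 8.4]{ds}) is a correct and concrete special case, but your treatment of odd $\ell$ (``identifies $\Sel(L/K,E[\ell])$ with an object that controls the rank difference'') is a placeholder rather than an argument; you should simply cite the rank inequality from \cite{ds}.

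For (ii) your route is genuinely different from the paper's. The paper picks $x\in E(L)\setminus E(K)$ with $px\in E(K)$ for some prime $p$, observes $K(x)=L$ since $[L:K]$ is prime, and then invokes the fact that the division field $K(p^{-1}(px))/K$ is unramified at primes of good reduction away from $p$ (via \cite[Theorem VII.7.1]{silv}), contradicting ramification at two good primes of distinct residue characteristics. You instead work directly with reduction modulo the ramified good primes: total ramification at $\q_i$ forces $\sigma$ to act trivially on the residue field, the class $(\sigma-1)P$ is prime-to-$p_i$ torsion, reduction is injective on such torsion at a good prime (\cite[Proposition VII.3.1]{silv}) and Galois-equivariant, so $(\sigma-1)P=0$ and $P\in E(L)^{\langle\sigma\rangle}=E(K)$; running this at both $\q_1$ and $\q_2$ kills all of $E(L)/E(K)$. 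Both arguments are sound and consume the same hypothesis (two ramified good primes of distinct residue characteristics, needed because the torsion order $m$ is not known in advance); yours is somewhat more elementary in that it avoids the ramification theory of division fields and replaces it with the injectivity of reduction on torsion, at the cost of a slightly longer bookkeeping argument about which primes can divide torsion orders.
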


\begin{proof}
By \cite[Proposition 8.8]{ds}, 
$$
\rk\,E(K) \le \rk\,E(L) \le \rk\,E(K) + (\ell-1)\dim_{\Fl}\Sel(L/K,E[\ell]).
$$
Assertion (i) follows directly.

Suppose now that $\rk\,E(L) = \rk\,E(K)$ but $E(L)$ properly contains $E(K)$.  
Then we can fix a point $x \in E(L)$ such that $x \notin E(K)$ but $px \in E(K)$ for 
some rational prime $p$.  Since $x \notin E(K)$ and $[L:K]$ is prime, we must have $K(x) = L$.
But $K(x)/K$ can ramify only at places of bad reduction and primes above $p$
\cite[Theorem VII.7.1]{silv}, so this 
contradicts our assumption that $L/K$ ramifies at good primes with two distinct 
residue characteristics.
\end{proof}

\begin{lem}
\label{cocyclem}
Suppose that $c$ is a cocycle representing a nonzero class in $H^1(K,E[\ell])$.  
Let $F = K(E[\ell])$.  
The restriction of $c$ to $G_F$ induces a surjective homomorphism
$$
G_F \too E[\ell].
$$
\end{lem}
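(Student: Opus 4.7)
The plan is to use inflation-restriction together with the surjectivity hypothesis \eqref{ess} to show first that $c|_{G_F}$ defines a nonzero homomorphism $G_F\to E[\ell]$, and then use the irreducibility of $E[\ell]$ as a $\Gal(F/K)$-module to upgrade this to surjectivity.

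First, since $F=K(E[\ell])$, the group $G_F$ acts trivially on $E[\ell]$, so the cocycle condition for $c|_{G_F}$ collapses to the statement that $c|_{G_F}$ is a group homomorphism $G_F\to E[\ell]$. The inflation-restriction sequence for the normal subgroup $G_F\lhd G_K$ with quotient $\Gal(F/K)\cong\GL_2(\Fl)$ reads
$$
0\too H^1(\Gal(F/K),E[\ell])\too H^1(K,E[\ell])\map{\mathrm{res}}H^1(F,E[\ell])^{\Gal(F/K)}.
$$
So to show $c|_{G_F}$ is nonzero, it suffices to verify that $H^1(\GL_2(\Fl),E[\ell])=0$, where $E[\ell]\cong\Fl^{\,2}$ is the standard representation.

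For $\ell\ge 3$, the scalar $-I\in\GL_2(\Fl)$ is central and acts as $-1$ on $E[\ell]$. The standard ``center kills'' argument (functoriality of $H^1$ applied to the automorphism $x\mapsto -x$ of the module) shows that multiplication by $-2$ annihilates $H^1(\GL_2(\Fl),E[\ell])$; since this group is also annihilated by $\ell$, and $\gcd(2,\ell)=1$, it vanishes. For $\ell=2$, the group $\GL_2(\F_2)\cong S_3$ has a cyclic Sylow $2$-subgroup $C_2$, and restriction to $C_2$ is injective on the $2$-primary component of $H^1$; a direct calculation (generator swaps the two standard basis vectors, so $\ker(1+\sigma)=\image(\sigma-1)$) shows $H^1(C_2,\F_2^{\,2})=0$. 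Hence in every case $H^1(\Gal(F/K),E[\ell])=0$, and the restriction map is injective, so $c|_{G_F}\ne 0$.

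Finally, the image of $c|_{G_F}$ is an $\Fl$-subspace of $E[\ell]$ which is stable under $\Gal(F/K)$: the restricted class lies in $H^1(F,E[\ell])^{\Gal(F/K)}=\Hom(G_F,E[\ell])^{\Gal(F/K)}$, and for any invariant homomorphism $\phi$, the relation $g\,\phi(g^{-1}hg)=\phi(h)$ forces $g\cdot\phi(G_F)=\phi(G_F)$ for all $g\in\Gal(F/K)$. By hypothesis \eqref{ess}, $\Gal(F/K)\cong\GL_2(\Fl)$ acts irreducibly on $E[\ell]\cong\Fl^{\,2}$, so the only stable subspaces are $0$ and $E[\ell]$; since the image is nonzero, it equals $E[\ell]$. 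The only real subtlety is the vanishing of $H^1(\GL_2(\Fl),E[\ell])$, which splits into the generic ``center kills'' case and the small-prime case $\ell=2$ handled separately.
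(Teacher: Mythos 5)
Your proof is correct and follows the same two-step structure as the paper's: inflation–restriction gives injectivity of restriction once $H^1(\GL_2(\Fl),\Fl^2)=0$, and then $\Gal(F/K)$-stability of the image together with irreducibility of $E[\ell]$ gives surjectivity. The only difference is that you supply the verification of $H^1(\GL_2(\Fl),\Fl^2)=0$ (center-kills for odd $\ell$, direct Sylow $2$-subgroup computation for $\ell=2$), which the paper states without proof; both your cases check out.
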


\begin{proof}
Using \eqref{ess}, the kernel of restriction $H^1(K,E[\ell]) \to H^1(F,E[\ell])$ 
is $H^1(F/K,E[\ell]) \cong H^1(\GL_2(\Fl),\Fl^2) = 0$.
Therefore the restriction of $c$ to $G_F$ is a 
nonzero homomorphism $\tilde{c} : G_F \to E[\ell]$.  
Since $\tilde{c}$ is the restriction of a class defined over $K$, 
we have that $\tilde{c}$ is $G_K$-equivariant, and in particular 
the image of $\tilde{c}$ is stable under $G_K$.  By \eqref{ess} it follows that 
$\tilde{c}$ is surjective.  
\end{proof}

\begin{defn}
If $\a$ is an ideal of $\O_K$,  
define relaxed-at-$\a$ and strict-at-$\a$ Selmer groups, respectively, by 
\begin{align*}
\Sel(K,E[\ell])^\a &:= \{c \in H^1(K,E[\ell]) : 
   \text{$\loc_v(c) \in \H_{\ell}(K_v/K_v)$ for every $v \nmid \a$}\},\\
\Sel(K,E[\ell])_\a &:= \{c \in \Sel(K,E[\ell]) : \text{$\loc_v(c) = 0$ for every $v \mid \a$}\}.
\end{align*}
Note that
$$
\Sel(K,E[\ell])_\a \subset \Sel(K,E[\ell]) \subset \Sel(K,E[\ell])^\a.
$$
\end{defn}

If $\Sigma$ is a finite set of places of $K$ containing all archimedean places, 
then the ring of $\Sigma$-integers of $K$ is
$$
\O_{K,\Sigma} := \{x \in K : \text{$x \in \O_{K_v}$ for every $v \notin \Sigma$}\}.
$$

\begin{defn}
\label{Qdef}
From now on let $\Sigma$ be a finite set of places of $K$ containing
all places where $E$ has bad reduction, all places dividing $\ell\infty$,
and large enough so that 
\begin{itemize}
\item
the primes in $\Sigma$ generate the ideal class group of $K$,
\item
the natural map $\O_{K,\Sigma}^\times/(\O_{K,\Sigma}^\times)^\ell \to 
   \prod_{v\in\Sigma} K_v^\times/(K_v^\times)^\ell$ is injective
\end{itemize}
(this is possible by \cite[Lemma 6.1]{kmr1}).
Define a set $\P$ of primes of $K$ by
$$
\P := \{\p \notin \Sigma : \bN\p \equiv 1 \pmod{\ell}\}
$$
and define a partition of $\P$ into disjoint subsets $\P_i$ for $0 \le i \le 2$ by
$$
\P_i := \{\p\in\P : \dim_{\Fl}\Hu(K_\p,E[\ell]) = i\}.
$$
(Equivalently by Proposition \ref{hl2}, $\P_i := \{\p\in\P : \dim_{\Fl}E(K_\p)[\ell]) = i\}$.) 
If $\a$ is an ideal of $\OK$, let $\P_1(\a)$ be the set of all $\p \in \P_1$ such that 
the localization map
$$
\Sel(K,E[\ell])_{\a} \map{\loc_\p} \Hu(K_\p,E[\ell])
$$
is nonzero.
\end{defn}

The next proposition is a modification of \cite[Proposition 9.10]{ds}.

\begin{prop}
\label{goodp}
\begin{enumerate}
\item
The sets $\P_0$ and $\P_1$ have positive density.
\item
Suppose $\a$ is an ideal of $\OK$ such that $\Sel(K,E[\ell])_\a$ is nonzero.  
Then $\P_1(\a)$ has positive density, and if $\p\in\P_1(\a)$ then 
$$
\dim_{\Fl}\Sel(K,E[\ell])_{\a\p} = \dim_{\Fl}\Sel(K,E[\ell])_{\a} - 1.
$$
\end{enumerate}
\end{prop}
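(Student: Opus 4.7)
The plan is to prove both parts via the Chebotarev density theorem, applied to $F = K(E[\ell])$ for (i) and to a cocycle-dependent extension $M/K$ for (ii). Condition \eqref{ess} identifies $\Gal(F/K)$ with $\GL_2(\Fl)$, and via the Weil pairing $\det(\Frob_\p) \equiv \bN\p \pmod{\ell}$, so the primes of $\P$ are precisely those with $\Frob_\p \in \SL_2(\Fl)$. By Proposition \ref{hl2}(ii), $\dim_{\Fl} \Hu(K_\p, E[\ell])$ equals the dimension of the $\Frob_\p$-fixed subspace of $E[\ell]$, which depends only on the conjugacy class of $\Frob_\p$. For (i), I would exhibit nonempty conjugacy classes in $\SL_2(\Fl)$ of each required type: for $\P_1$, a nontrivial unipotent $u = \begin{pmatrix} 1 & 1 \\ 0 & 1 \end{pmatrix}$ (one-dimensional fixed space); for $\P_0$, an element with no $1$-eigenvector, e.g.\ $-I$ when $\ell$ is odd, or an order-$3$ element of $\SL_2(\F_2) \cong S_3$ when $\ell = 2$. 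Chebotarev then yields positive density for both sets.

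For (ii), let $c$ be a nonzero class in $\Sel(K,E[\ell])_\a$. By Lemma \ref{cocyclem} its restriction to $G_F$ is a $G_K$-equivariant surjection $\tilde c : G_F \onto E[\ell]$. Let $M \subset \bar{K}$ be the fixed field of $\ker \tilde c$, so $\Gal(M/F) \cong E[\ell]$ and $\Gal(M/K)$ fits in a short exact sequence
$$
1 \too E[\ell] \too \Gal(M/K) \too \GL_2(\Fl) \too 1.
$$
Pick a lift $\sigma_0 \in \Gal(M/K)$ of $u$. Every lift of $u$ has the form $t\sigma_0$ for $t \in E[\ell]$, and since $t$ acts trivially on $E[\ell]$ the cocycle identity gives $c(t\sigma_0) = \tilde c(t) + c(\sigma_0)$. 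Because $(u-1)E[\ell]$ is one-dimensional in the two-dimensional space $E[\ell]$ and $\tilde c$ is surjective, I can choose $t$ so that $c(t\sigma_0) \notin (u-1)E[\ell]$. Chebotarev applied to $M/K$ then produces a positive density of primes $\p$ whose Frobenius is conjugate to $t\sigma_0$; for any such $\p$, Proposition \ref{hl2}(iii) gives $\loc_\p(c) \ne 0$ in the one-dimensional space $\Hu(K_\p, E[\ell])$, so $\p \in \P_1(\a)$. Then $\loc_\p : \Sel(K,E[\ell])_\a \onto \Hu(K_\p, E[\ell])$ is surjective with kernel $\Sel(K,E[\ell])_{\a\p}$, yielding the claimed drop by one.

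The main subtlety is the last Chebotarev step: one needs to verify that $c(\Frob_\p) \bmod (\Frob_\p - 1)E[\ell]$ is a well-defined, conjugation-invariant function of $\p$ (which is precisely why the evaluation map of Proposition \ref{hl2}(iii) is well defined at all), and that the surjectivity of $\tilde c$ genuinely suffices to escape the proper subspace $(u-1)E[\ell]$ of $E[\ell]$. Given these, the density and dimension-drop assertions follow immediately.
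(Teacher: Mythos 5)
Your proof is correct and follows essentially the same route as the paper's: for (i), Chebotarev applied to $F = K(E[\ell])$ with conjugacy classes in $\SL_2(\Fl)$ having the required fixed-space dimension; for (ii), using Lemma~\ref{cocyclem} to get a $G_K$-equivariant surjection $\tilde c : G_F \onto E[\ell]$, adjusting a lift of a unipotent element so that $c$ evaluated there escapes $(u-1)E[\ell]$, and then applying Chebotarev to the extension cut out by $c|_{G_F}$ (your $M$ is the minimal choice of the paper's $N$). The "subtlety" you flag at the end is real but routine, and the paper handles it implicitly via the same conjugation/coset invariance you describe.
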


\begin{proof}
Let $F := K(E[\ell])$, and 
fix $i = 0$ or $1$.
It follows from the surjection \eqref{ess} that $\Gal(F/K(\bmu_\ell)) \cong \SL_2(\Fl)$. 
Fix $\tau_i \in G_{K(\bmu_\ell)}$ such that $\dim_{\Fl}E[\ell]^{\tau_i = 1} = i$.

Suppose that $\p$ is a prime of $K$ whose Frobenius conjugacy class in 
$\Gal(K(E[\ell])/K)$ is the class of $\tau_i$.  
Since $\tau_i$ fixes $\bmu_{\ell}$, we have that $\bmu_{\ell} \subset K_\p^\times$
so $\bN\p \equiv 1 \pmod{\ell}$ and therefore by definition $\p\in\P$.  By Proposition \ref{hl2} 
we have 
$\dim_{\Fl} \H_\ell(L_v/K_v) = \dim_{\Fl}\Hu(K_v,E[\ell]) = \dim E[\ell]^{\tau_i = 1} = i$, 
so $\p\in\P_i$.  It follows from the Cebotarev Theorem that $\P_i$ has positive density.  
This is (i).

Fix an ideal $\a$ of $\OK$ and suppose that $c$ is a cocycle representing 
a nonzero element of $\Sel(K,E[\ell])_\a$.  Let $\tau_1$ be as above.
By Lemma \ref{cocyclem}, the restriction of $c$ to $G_F$ 
induces a surjective (and therefore nonzero) homomorphism 
$$
\tilde{c} : G_F \too E[\ell]/(\tau_1-1)E[\ell].
$$
Since $\tilde{c} \ne 0$, we can find $\gamma \in G_F$ such that 
$\tilde{c}(\gamma) \ne -c(\tau_1)$ in $E[\ell]/(\tau_1-1)E[\ell]$.  
Then 
$$
c(\gamma\tau_1) \notin (\tau_1-1)E[\ell] = (\gamma\tau_1-1)E[\ell].
$$

Let $N$ be a Galois extension of $K$ containing $F$ and such that the restriction 
of $c$ to $G_F$ factors through $\Gal(N/F)$.
If $\p$ is a prime whose Frobenius conjugacy class in $\Gal(N/K)$ is the class of 
$\gamma\tau_1$, then by Proposition \ref{hl2}(iii) we have $\loc_\p(c) \ne 0$, so 
$\p\in\P_1(\a)$.  Now the Cebotarev Theorem shows that $\P_1(\a)$ has positive density.

If $\p\in\P_1(\a)$ then we have an exact sequence
$$
0 \too \Sel(K,E[\ell])_{\a\p} \too \Sel(K,E[\ell])_{\a} \map{\loc_\p} \Hu(K_\p,E[{\ell}]) \too 0
$$
where the right-hand map is surjective because it is nonzero and the target space 
is one-dimensional.  This completes the proof of (ii).
\end{proof}

\begin{defn}
\label{updef}
Suppose $T$ is a finite set of primes of $K$, disjoint from $\Sigma$.  
If $\ell=2$ let $\Sigma_0$ denote the subset of $\Sigma$ consisting of all primes 
$\p$ where $E$ has multiplicative reduction and such that $\ord_\p(\Delta_E)$ is odd. 
If $\ell>2$ let $\Sigma_0$ be the empty set.
We say that an extension $L/K$ is {\em $T$-ramified and $\Sigma$-split} if 
\begin{itemize}
\item
every $\p\in T$ is ramified in $L/K$, every $\p \notin T$ is unramified in $L/K$, 
\item
every $v \in \Sigma - \Sigma_0$ splits in $L/K$.
\end{itemize}
\end{defn}

The next proposition is a modification of \cite[Proposition 9.17]{ds}.

\begin{prop}
\label{l7.13}
Let $r := \dim_{\Fl}\Sel(K,E[\ell])$ and suppose $t \le r$.
\begin{enumerate}
\item
There is a set of primes $\TT \subset \P_1$ of cardinality $t$ such that 
$$
\dim_{\Fl}\Sel(K,E[\ell])_\a = r-t,
$$
where $\a := {\prod_{\p\in T}\p}$.
\item
If $\TT$ satisfies (i), $T_0$ is a finite subset of $\P_0$, 
and $L/K$ is a cyclic extension of $K$ 
of degree $\ell$ that is $(T \cup T_0)$-ramified and $\Sigma$-split, then 
$$
\dim_{\Fl}\Sel(L/K,E[\ell]) = r-t.
$$
\end{enumerate}
\end{prop}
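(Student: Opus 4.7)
The plan is to prove (i) by induction on $t$ using Proposition \ref{goodp}(ii), and then to deduce (ii) via a Poitou-Tate duality argument that localizes at the primes of $T$.

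For (i), I would build $T = \{\p_1,\ldots,\p_t\}$ one prime at a time. Set $\a_0 := (1)$, so $\Sel(K,E[\ell])_{\a_0} = \Sel(K,E[\ell])$ has dimension $r$. At step $i$, since $i \le t \le r$, the group $\Sel(K,E[\ell])_{\a_{i-1}}$ has dimension $r-(i-1) \ge 1$ and Proposition \ref{goodp}(ii) provides a positive-density set $\P_1(\a_{i-1})$, from which I can pick $\p_i$ distinct from $\p_1,\ldots,\p_{i-1}$ to obtain $\dim\Sel(K,E[\ell])_{\a_i} = r-i$. Set $\a := \a_t$.

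For (ii), I would first verify by a routine case-by-case check that the local conditions defining $\Sel(L/K,E[\ell])$ coincide with those for $\Sel(K,E[\ell])$ at every place $v \notin T$: Proposition \ref{hl}(ii) handles $v \in \Sigma - \Sigma_0$ (split, so $L_v = K_v$), Proposition \ref{3.5} handles $v \in \Sigma_0$ when $\ell=2$ (unramified), Proposition \ref{hl2}(i) handles good primes outside $T \cup T_0$ (unramified), and at $\p \in T_0 \subset \P_0$ the group $H^1(K_\p,E[\ell])$ vanishes (by local Euler characteristic and Tate duality, using $\bmu_\ell \subset K_\p$). This yields
$$
\Sel(K,E[\ell])_\a \;\subset\; \Sel(L/K,E[\ell]) \;\subset\; \Sel(K,E[\ell])^\a.
$$
The heart of the proof is then to use Poitou-Tate global reciprocity to pin down $\Sel(L/K,E[\ell])$. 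Let
$$
V := \image\bigl(\Sel(K,E[\ell])^\a \map{\loc_T} \dirsum{\p \in T} H^1(K_\p,E[\ell])\bigr).
$$
Since $E[\ell]$ is Weil self-dual and every local condition cutting out $\Sel(K,E[\ell])^\a$ at places outside $T$ is maximal isotropic under the local Tate pairing (Proposition \ref{hl}(i); trivially at $T_0$ where $H^1=0$), Poitou-Tate forces $V$ to be isotropic, so $\dim V \le \frac{1}{2}\sum_{\p\in T}\dim H^1(K_\p,E[\ell]) = t$. On the other hand, part (i) makes the localization $\Sel(K,E[\ell]) \to \dirsum{\p \in T}\Hu(K_\p,E[\ell])$ surjective (its kernel $\Sel(K,E[\ell])_\a$ has codimension $t$ in $\Sel(K,E[\ell])$, matching the dimension of the target), so $V \supset \dirsum{\p \in T}\Hu(K_\p,E[\ell])$, and comparison of dimensions forces $V = \dirsum{\p \in T}\Hu(K_\p,E[\ell])$. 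For any $c \in \Sel(L/K,E[\ell])$ the components of $\loc_T(c)$ then lie in $\Hu(K_\p,E[\ell]) \cap \H_\ell(L_\p/K_\p) = 0$ by Proposition \ref{hl1}, so $c \in \Sel(K,E[\ell])_\a$. Equality follows, and the dimension is $r-t$ by (i).

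The main obstacle is the Poitou-Tate isotropy step: one must correctly identify the finite set of places that can contribute to global reciprocity and verify that at each excluded place the local pairing vanishes because both classes lie in a maximal isotropic subspace — the case $v \in \Sigma_0$ (when $\ell=2$) is the delicate one and is exactly what Proposition \ref{3.5} is set up to handle.
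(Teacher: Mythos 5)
Your proposal is correct and follows essentially the same two-step strategy as the paper: an induction via Proposition \ref{goodp}(ii) for part (i), and a global-duality argument localized at $T$ to force $\Sel(L/K,E[\ell]) = \Sel(K,E[\ell])_\a$ for part (ii). The only difference is cosmetic: where the paper invokes the precise statement that the images of the two localization maps in \eqref{gdd} are orthogonal complements (\cite[Theorem 2.3.4]{kolysys}), you derive the same containment $\loc_T(\Sel(K,E[\ell])^\a) \subset \bigoplus_{\p\in T}\Hu(K_\p,E[\ell])$ by a softer isotropy-plus-dimension-count argument, which is an equally valid reading of Poitou--Tate.
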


\begin{proof}
We will prove (i) by induction on $t$.  If $t = 0$, then $T$ is the empty set.

Suppose $\TT$ satisfies (i) for $t$, and $t < r$. 
Let $\a := {\prod_{\p\in \TT}\p}$.  
By Proposition \ref{goodp}(ii) we can find $\p \in \P_1(\a)$ 
so that 
$$
\dim_{\Fl}\Sel(K,E[\ell])_{\a\p} = r-t-1.
$$  
Then $\TT \cup \{\p\}$ satisfies (i) for $t+1$.  This proves (i).

Now suppose that $\TT$ satisfies (i), and let $\a := {\prod_{\p\in \TT}\p}$.  Consider the exact sequences
\begin{equation}
\label{gdd}
\raisebox{19pt}{
\xymatrix@C=12pt@R=7pt{
0 \ar[r] & \Sel(K,E[\ell]) \ar[r] & \Sel(K,E[\ell])^{\a} \ar^-{\oplus \loc_\p}[rr] 
   && \dirsum{\p\in \TT}\displaystyle\frac{H^1(K_\p,E[\ell])}{\Hu(K_\p,E[\ell])} \\
0 \ar[r] & \Sel(K,E[\ell])_{\a} \ar[r] & \Sel(K,E[\ell]) \ar^-{\oplus \loc_\p}[rr] 
   && \dirsum{\p\in \TT}\Hu(K_\p,E[\ell]).
}}
\end{equation}
By global duality (see for example \cite[Theorem 2.3.4]{kolysys}),
the images of the two right-hand maps in \eqref{gdd} are orthogonal complements of each other 
under the sum of the local Tate pairings.  By our choice of $\TT$ the lower right-hand map 
is surjective, so the upper right-hand map is zero, i.e., 
\begin{equation}
\label{siss}
(\oplus_{\p\in \TT} \loc_\p)(\Sel(K,E[\ell])^{\a})  \subset \dirsum{\p\in \TT}\Hu(K_\p,E[\ell]).
\end{equation}
Let $T_0$ be a finite subset of $\P_0$, let $\b := \prod_{\p\in T_0}\p$, and suppose 
$L$ is a cyclic extension of $K$ of degree $\ell$ that is 
$(T \cup T_0)$-ramified and $\Sigma$-split.
By Propositions \ref{hl2}(i) and \ref{3.5}, we have $\H_\ell(L_v/K_v) = \H_\ell(K_v/K_v)$
if $v \notin T \cup T_0$.  Thus 
by Definition \ref{seldef}, $\Sel(L/K,E[\ell])$ is the kernel of the map
$$
\Sel(K,E[\ell])^{\a\b} \map{\oplus_{\p\in T \cup T_0} \loc_\p} 
   \dirsum{\p\in T \cup T_0}H^1(K_\p,E[\ell])/\H_\ell(L_\p/K_\p).
$$
We have $\H_\ell(L_\p/K_\p) = H^1(K_\p,E[\ell]) = 0$ for every $\p\in\P_0$ 
by Propositions \ref{hl}(i) and \ref{hl2}(ii) and the 
definition of $\P_0$, so in fact $\Sel(L/K,E[\ell])$ is the kernel of the map
\begin{equation}
\label{siss2}
\Sel(K,E[\ell])^\a \map{\oplus_{\p\in \TT} \loc_\p} \dirsum{\p\in \TT}H^1(K_\p,E[\ell])/\H_\ell(L_\p/K_\p).
\end{equation}
By Proposition \ref{hl1}, $\Hu(K_\p,E[\ell]) \cap \H_\ell(L_\p/K_\p) = 0$ for every $\p \in T$.  
Combining \eqref{siss} and \eqref{siss2} shows that $\Sel(L/K,E[\ell]) = \Sel(L/K,E[\ell])_\a$, 
so by our choice of $\TT$ we have $\dim_{\Fl}\Sel(L/K,E[\ell]) = r-t$.
This proves (ii).
\end{proof}

\begin{prop}
\label{3.16}
Suppose $T$ is a finite subset of $\P_0 \cup \P_1$.  
If $\ell = 2$ suppose further that $E$ has a prime $\q\nmid 2$ of multiplicative reduction 
such that $\ord_\q(\Delta_E)$ is odd.  
Then there are a finite 
subset $T_0 \subset \P_0$ and a cyclic extension $L/K$ of degree $\ell$ such that 
\begin{enumerate}
\item
$L/K$ is $(T \cup T_0)$-ramified and $\Sigma$-split,
\item
$K$ has a prime $\p$ of degree $1$, unramified over $\Q$, such that $L/K$ is ramified at $\p$ and 
unramified at all primes $\p' \ne \p$ with the same residue characteristic as $\p$.
\end{enumerate} 
\end{prop}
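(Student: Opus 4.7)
My plan is to construct $L/K$ as a cyclic degree-$\ell$ extension via class field theory with prescribed local behavior, using auxiliary primes in $\P_0$ to kill obstructions, in the spirit of \cite[Proposition~9.17]{ds}. Relative to that proposition the new ingredient is the selection of a distinguished prime $\p$ satisfying (ii).

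First I would choose $\p$. By Proposition~\ref{goodp}(i) the set $\P_0$ has positive density. By the Cebotarev theorem, the subset of $\P_0$ consisting of degree-$1$ primes of $K$ that are unramified over $\Q$ and whose residue characteristic $p_0 := \p\cap\Z$ is distinct from those of every prime in $T\cup\Sigma$ still has positive density, since only finitely many residue characteristics are excluded. Pick any such $\p$; it will serve both as the distinguished prime and as one member of $T_0$.

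Next I would prescribe local characters. For each $\q\in T\cup\{\p\}$, the condition $\bN\q\equiv 1\pmod\ell$ gives a unique tamely ramified cyclic degree-$\ell$ character $\chi_\q:K_\q^\times\to\Z/\ell\Z$. For each $v\in\Sigma-\Sigma_0$ take $\chi_v=0$ (split). For each $v\in\Sigma_0$ (empty unless $\ell=2$) allow any unramified character. By class field theory, cyclic degree-$\ell$ extensions of $K$ correspond to nontrivial $\F_\ell$-valued characters of the idele class group, and the question becomes whether the tuple $(\chi_v)$ extends to a global character. By Poitou--Tate global duality, the obstruction is measured by a finite-dimensional dual Selmer group of characters $\psi:G_K\to\mu_\ell$ ramified only in $T\cup\{\p\}\cup\Sigma$ and satisfying dual local conditions. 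For each nonzero $\psi$ in this group, Cebotarev applied to the compositum of $K(E[\ell])$ with the cyclic extension cut out by $\psi$ produces a prime $\p'\in\P_0$ with residue characteristic different from $p_0$ and not in $T\cup\Sigma$, at which $\psi$ is nontrivial; adjoining $\p'$ to $T_0$ strictly shrinks the dual obstruction. After finitely many such additions the obstruction vanishes, yielding a global character $\chi$, and I would let $L$ be the cyclic degree-$\ell$ extension of $K$ cut out by $\chi$. Finally I prune $T_0$ by removing any prime at which $\chi$ happens to be unramified; the resulting $L/K$ is $(T\cup T_0)$-ramified and $\Sigma$-split, and condition (ii) holds because of the choice of $\p$ and because no other prime of $T\cup T_0$ lies above $p_0$.

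The main obstacle is the Cebotarev step used to kill the obstruction: for each nonzero dual character $\psi$ one must produce $\p'\in\P_0$ on which $\psi$ is nontrivial, which requires compatibility of the two Frobenius constraints on the compositum of $K(E[\ell])$ with the fixed field of $\psi$. The surjectivity hypothesis~\eqref{ess} is the main input here, forcing $K(E[\ell])$ to be large enough that the constraints are generically compatible. When $\ell=2$, the cyclic extension cut out by $\psi$ can intersect $K(E[2])$ nontrivially, making the compatibility delicate; this is precisely where the hypothesis that $E$ has a multiplicative prime $\q\nmid 2$ with $\ord_\q(\Delta_E)$ odd enters, for it forces $\Sigma_0$ to be nonempty and, via Proposition~\ref{3.5}, supplies a "free" unramified local slot at $\q$ which absorbs the classical Grunwald--Wang obstruction to the local-global lifting.
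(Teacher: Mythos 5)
Your approach is essentially the same as the paper's: choose a degree-$1$ prime $\p\in\P_0$ of fresh residue characteristic, use class field theory to realize prescribed local ramification at $T\cup\{\p\}$ and splitting at $\Sigma-\Sigma_0$ (allowing extra ramification at auxiliary primes in $\P_0$), and for $\ell=2$ use the unconstrained slot at the multiplicative prime $\q$ to absorb the Grunwald--Wang obstruction. The one substantive difference is that the paper outsources the entire local-to-global lifting step, including the precise $\ell=2$ dichotomy, to \cite[Proposition 10.7]{kmr2}, whereas you sketch a direct Poitou--Tate plus Cebotarev argument. Your sketch is right in outline; the places where it is thin are (a) the claim that for each nonzero dual class $\psi$ a Cebotarev prime $\p'\in\P_0$ with $\psi(\Frob_{\p'})\neq 1$ exists, which for $\ell\ge 3$ rests on $\GL_2(\Fl)^{\ab}\cong\Fl^\times$ having no order-$\ell$ quotient so that $K_\psi$ is linearly disjoint from $K(E[\ell])$, and (b) the $\ell=2$ case, where $K(\sqrt{\Delta_E})\subset K(E[2])$ creates genuine entanglement and the cited kmr2 result handles this by the precise dichotomy (image contains $\omega$ or its twist $\omega'$ at $\q$), which is then compatible with Proposition \ref{3.5} because $\omega'_\q$ is unramified. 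Filling those in recovers the paper's argument.
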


\begin{proof}
Fix a prime $\p \in \P_0$ of degree $1$, unramified over $\Q$, 
whose residue characteristic $p$ is different from 
the residue characteristics of all primes in $T$ (this is possible by 
Proposition \ref{goodp}(i)).  Let $\Sigma_p := \Sigma \cup \{\text{$v$ of $K$ : $v \mid p$}\}$.

Define a set of global Galois characters
\begin{multline*}
\cC(T) := \{\chi \in \Hom(G_K,\bmu_\ell) : \text{$\chi$ is ramified at all $\p \in T$ and} \\ 
\text{unramified at all primes not in $\Sigma_p \cup T \cup \P_0$}\},
\end{multline*}
and a set of tuples of local characters 
$$
\Omega_T := \prod_{v \in \Sigma_p} \Hom(G_{K_v}, \bmu_\ell) 
   \times \prod_{v \in T} \Hom_\ram(G_{K_v}, \bmu_\ell)
$$
where $\Hom_\ram(G_{K_v}, \bmu_\ell) \subset \Hom(G_{K_v}, \bmu_\ell)$ denotes the 
subset of ramified characters.

Suppose first that $\ell \ge 3$.  
Restriction gives a natural map of sets $\cC(T) \to \Omega_T$, and by 
\cite[Proposition 10.7]{kmr2} this map is surjective.  
Now take an element $\omega \in \Omega_T$ whose $v$-component $\omega_v$ is 
the trivial character if $v \in \Sigma_p$ is different from $\p$, 
and such that $\omega_\p$ is ramified.
Let $\chi \in \cC(T)$ be any character that restricts to $\omega$, and let $T_0$ 
be the set of primes (necessarily in $\P_0$) not in $T$ where $\chi$ is ramified. 
If $L$ is the cyclic extension of $K$ corresponding to $\chi$, then $L/K$ 
is $(T \cup T_0)$-ramified and $\Sigma$-split, and $L$ ramifies at $\p$ 
but not at any other prime above $p$.  This proves the proposition when $\ell \ge 3$.

When $\ell = 2$ the proof is similar, except that the map $\eta : \cC(T) \to \Omega_T$ 
is not surjective.    
However in this case, \cite[Proposition 10.7]{kmr2} shows that the image of $\eta$ 
contains either our chosen $\omega \in \Omega_T$, or else 
$\omega'$, where $\omega'_v$ is $\omega_v$ if 
$v \ne \q$, and $\omega'_\q$ is the nontrivial unramified quadratic character 
(where $\q$ is the given prime of multiplicative reduction).  
The proof now proceeds exactly as in the case of odd $\ell$, using a character 
$\chi \in \cC(T)$ that maps to either $\omega$ or $\omega'$.
\end{proof}

\section{Proof of Theorem \ref{main}}

\begin{defn}
Let $\cS$ be the set of all elliptic curves $E$ over $\Q$ satisfying all 
of the following properties:
\begin{itemize}
\item
for every prime $\ell$, the map $G_{\Q} \to \Aut(E[\ell]) \cong \GL_2(\F_\ell)$ 
is surjective,
\item
$E$ has discriminant $\Delta_E \equiv 1 \pmod{4}$, 
\item
$E$ has a prime $q \nmid 2$ of multiplicative reduction such that 
$\ord_q(\Delta_E)$ is odd.
\end{itemize}
Let $\cS_0 = \{E \in \cS : E(\Q) = 0\}$.
\end{defn}

\begin{lem}
\label{Sne}
The set $\cS_0$ is nonempty.
\end{lem}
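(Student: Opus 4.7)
The plan is to exhibit a single explicit elliptic curve $E/\Q$ that satisfies all four conditions in the definition of $\cS$ together with $E(\Q)=0$. Since each condition is individually generic (holds for ``most'' elliptic curves in any reasonable sense), one expects such curves to be abundant, and the task reduces to producing one concrete example and verifying the conditions.

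First I would search tables of elliptic curves over $\Q$ (for instance, the LMFDB or Cremona's tables) for a curve of small conductor with trivial Mordell--Weil group $E(\Q)=\{O\}$; such curves are plentiful once one moves past the smallest conductors where torsion is forced. Among these I would restrict to curves whose minimal discriminant $\Delta_E$ is congruent to $1$ modulo $4$, and whose factorization contains at least one odd prime $q$ with $\ord_q(\Delta_E)$ odd, which automatically yields a prime of multiplicative reduction (hence split or nonsplit Kodaira type $I_n$ with $n$ odd) satisfying the third bullet in the definition of $\cS$.

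The remaining, and most delicate, step is to verify that $\rho_{E,\ell}: G_{\Q} \to \Aut(E[\ell]) \cong \GL_2(\F_\ell)$ is surjective for \emph{every} rational prime $\ell$. I would handle this in two parts. For small primes $\ell$ (below an explicit bound depending on the conductor), one checks surjectivity directly using Serre's criteria: it suffices to exhibit a Frobenius element whose characteristic polynomial is irreducible modulo $\ell$ together with one having appropriate trace, ruling out all maximal subgroups of $\GL_2(\F_\ell)$ (Borel, normalizer of a (non)split Cartan, and exceptional subgroups for $\ell \le 13$). For large primes, surjectivity follows automatically from Serre's open image theorem together with an effective bound (e.g.\ Mazur's theorem for the Borel case, and effective versions of Serre's bound due to Cojocaru, Zywina, and others); for any specific $E$ without CM, this reduces the verification to finitely many $\ell$. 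In practice one may cite the LMFDB entry for the chosen curve, which tabulates exactly the set of primes at which $\rho_{E,\ell}$ fails to be surjective, and pick a curve where this set is empty.

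The main obstacle is thus the finite, but non-trivial, case check of mod-$\ell$ surjectivity at the small primes; the other three properties are immediate from the Weierstrass equation. Once a single curve $E$ has been produced and verified to satisfy all four properties, we obtain $E \in \cS_0$, proving that $\cS_0 \ne \emptyset$.
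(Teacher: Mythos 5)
Your proposal is correct and follows essentially the same route as the paper, which simply exhibits the curve $67.a1$ (discriminant $-67 \equiv 1 \pmod 4$, multiplicative reduction at $67$, trivial Mordell--Weil group) and cites LMFDB and Serre's Proposition 21 for surjectivity of all $\rho_{E,\ell}$. One small slip: you assert that an odd prime $q$ with $\ord_q(\Delta_E)$ odd ``automatically yields a prime of multiplicative reduction,'' which is false in general (additive reduction is compatible with any discriminant valuation); multiplicative reduction at $q$ must be checked separately, as the paper does, though this does not affect the viability of the strategy.
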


\begin{proof}
Let $E$ be the elliptic curve labelled $67.a1$ in the $L$-functions and Modular Forms Database 
\cite{lmfdb}.  
We will show that $E \in \cS_0$.

The surjectivity of the map $G_\Q \to \Aut(E[\ell]) \cong \GL_2(\F_\ell)$
for every $\ell$ is stated in \cite[curve $67.a1$]{lmfdb}; alternatively this follows directly from 
\cite[Proposition 21]{serre} and the fact that $E$ has no rational isogenies \cite[curve $67.a1$]{lmfdb}.
The curve $E$ has multiplicative reduction at $67$, and its discriminant $-67$ is 
congruent to $1 \pmod{4}$.  Thus $E \in \cS$, and $E(\Q) = 0$ so $E \in \cS_0$.
\end{proof}

\begin{rem}
\label{rmk1}
The elliptic curve $67.a1$ in the proof of Lemma \ref{Sne} has 
only one rational point.  If we instead take $E$
to be the curve $37.a1$, then $E(\Q) \cong \Z$ and a similar argument shows that $E \in \cS$.
\end{rem}

For $E \in \cS$, define $T_E$ to be the compositum 
of all the fields $\Q(E[\ell])$ for all primes $\ell$.  Note that if $K$ is a 
number field and $K \cap T_E = \Q$, then 
for every prime $\ell$, the map $G_K \to \Aut(E[\ell]) \cong \GL_2(\F_\ell)$ 
is surjective.

\begin{prop}
\label{prop}
Suppose $E \in \cS$, and
$K$ is a finite nontrivial extension of $\Q$ such that $K \cap T_E = \Q$ and $E(K) = E(\Q)$.  Then 
for each rational prime $\ell$ there are infinitely many cyclic extensions $L/K$ 
of degree $\ell$ such that:
\begin{enumerate}
\item
$E(L) = E(K)$,
\item
$K$ has a prime $\p$ of degree $1$, unramified over $\Q$, such that $L/K$ is ramified at $\p$
and unramified at all primes $\p' \ne \p$  with the same residue characteristic as $\p$,
\item
$L/K$ is ramified at (at least) two primes where $E$ has good reduction and that have 
different residue characteristics,
\item
every place of $K$ dividing $\ell\infty$ splits completely in $L/K$.
\end{enumerate}
\end{prop}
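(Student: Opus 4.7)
The plan is to combine the relative Selmer apparatus of Section 3 with the character-construction of Proposition \ref{3.16}. Since $K\cap T_E=\Q$, in particular $K\cap\Q(E[\ell])=\Q$ for every rational prime $\ell$, so the hypothesis \eqref{ess} holds for $E/K$ at every $\ell$. Fix $\ell$ and choose a finite set $\Sigma$ of places of $K$ as in Definition \ref{Qdef}, and set $r:=\dim_{\F_\ell}\Sel(K,E[\ell])$. By Proposition \ref{l7.13}(i) applied with $t=r$, there is a set $T\subset\P_1$ of cardinality $r$ such that $\Sel(K,E[\ell])_{\a}=0$, where $\a:=\prod_{\p\in T}\p$.

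Next, using Proposition \ref{goodp}(i) I would pick an auxiliary degree-one prime $\p_2\in\P_0$, unramified over $\Q$, whose residue characteristic $p_2$ is different from $\ell$ and from the residue characteristics of all primes in $T$. I then apply Proposition \ref{3.16} to the set $T\cup\{\p_2\}\subset\P_0\cup\P_1$: this produces a finite $T_0'\subset\P_0$, a cyclic degree-$\ell$ extension $L/K$ that is $(T\cup\{\p_2\}\cup T_0')$-ramified and $\Sigma$-split, and a distinguished degree-one prime $\p$ of $K$, ramified in $L/K$ but unramified at all other primes of $K$ above its residue characteristic $p$ (with $p\ne p_2$ and distinct from the residue characteristics of primes in $T$).

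All four conclusions are now at hand. Condition (ii) is the content of Proposition \ref{3.16}. Condition (iii) holds because $\p,\p_2\in\P_0$ are two primes of good reduction, both ramified in $L/K$, with distinct residue characteristics. Condition (iv) follows from ``$\Sigma$-split'' provided every place of $K$ above $\ell\infty$ lies in $\Sigma\setminus\Sigma_0$: this is automatic when $\ell>2$ (then $\Sigma_0=\emptyset$), and when $\ell=2$ the assumption $\Delta_E\equiv 1\pmod 4$ forces $E$ to have good reduction above $2$, putting no place above $\ell\infty$ in $\Sigma_0$. For condition (i), set $T_0:=\{\p_2\}\cup T_0'\subset\P_0$; then $L/K$ is $(T\cup T_0)$-ramified and $\Sigma$-split, so Proposition \ref{l7.13}(ii) gives $\dim_{\F_\ell}\Sel(L/K,E[\ell])=r-r=0$, Proposition \ref{ysel}(i) gives $\rk\,E(L)=\rk\,E(K)$, and Proposition \ref{ysel}(ii) (whose hypothesis is precisely condition (iii)) upgrades this to $E(L)=E(K)$. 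Infinitely many such $L$ appear because Proposition \ref{goodp}(i) furnishes infinitely many admissible $\p_2$, and each $L$ has $\p_2$ in its finite ramification set, so the correspondence from $\p_2$ to $L$ has finite fibers.

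The main technical wrinkle I anticipate is verifying the hypothesis of Proposition \ref{3.16} in the $\ell=2$ case, which requires a prime $\q\nmid 2$ of $K$ at which $E/K$ has multiplicative reduction and $\ord_\q(\Delta_E)$ is odd. The bad-reduction prime $q$ supplied over $\Q$ by $E\in\cS$ gives a candidate $\q$ above $q$ in $K$ with multiplicative reduction, but odd discriminant valuation descends only when the ramification index $e(\q/q)$ is odd. This does not follow from $K\cap T_E=\Q$ alone; it will be maintained in the intended recursive application because every extension step is ramified only at primes outside $\Sigma$ (hence unramified at $q\in\Sigma$), keeping $q$ unramified over $\Q$ throughout the tower.
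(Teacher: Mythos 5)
Your proof is correct and takes essentially the same route as the paper: apply Proposition \ref{l7.13}(i) with $t=r$ to kill $\Sel(K,E[\ell])_{\a}$, pad $T$ with one or two auxiliary primes in $\P_0$, feed the result into Proposition \ref{3.16} to construct $L$, and then descend conclusions (i)--(iv) from Propositions \ref{l7.13}(ii), \ref{ysel}, and \ref{3.16}. The one cosmetic difference is that the paper enlarges $T$ to a set $T'$ already containing two $\P_0$-primes with distinct residue characteristics, whereas you add a single $\p_2\in\P_0$ and use the distinguished prime $\p$ supplied by Proposition \ref{3.16}(ii) as the second one; both work since that $\p$ lies in $\P_0$ with residue characteristic avoiding $T\cup\{\p_2\}$. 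Your closing remark about the $\ell=2$ hypothesis of Proposition \ref{3.16} is a genuinely sharper observation than anything the paper makes explicit: $K\cap T_E=\Q$ does not by itself force $q$ to be unramified (or odd-ramified) in $K/\Q$, so the hypothesis ``$\ord_\q(\Delta_E)$ odd for some $\q\mid q$'' is not automatic for the class of $K$ in the statement; it \emph{is} automatic for the $K_i$ that actually arise in Theorem \ref{thm}, because each step is $\Sigma$-split and ramified only at primes of $\P\subset$ (primes outside $\Sigma$), so $q\in\Sigma$ stays unramified up the tower.
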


\begin{proof}
Let $T$ be a finite subset of $\P_1$ satisfying Proposition \ref{l7.13}(i) 
with $t := \dim_{\Fl}\Sel(K,E[\ell])$.  Let $T'$ be a finite subset of $T \cup \P_0$ 
containing $T$ and at least two primes in $\P_0$ with different residue characteristics.
Now apply Proposition \ref{3.16}, with the set $T'$ in place of $T$, to produce 
a cyclic extension $L/K$ that is $(T' \cup T_0)$-ramified and $\Sigma$-split for 
some $T_0 \subset \P_0$.

By Proposition \ref{l7.13}(ii) we have $\Sel(L/K,E[\ell]) = 0$, and then 
Proposition \ref{ysel} shows that $E(L) = E(K)$.  
Assertion (ii) is Proposition \ref{3.16}(ii), and (iii) and (iv) 
follow from our choice of $T'$ and Definition \ref{updef} 
of ``$\Sigma$-split'', since all places dividing $\ell\infty$ are in $\Sigma$.

Thus $L$ has the desired properties.  By varying the set $T'$ we can produce 
infinitely many such $L$.  
\end{proof}

\begin{lem}
\label{llem}
Suppose $E \in \cS$.  Then there is a real quadratic field $F$, ramified 
at $2$, such that $E(F) = E(\Q)$ and $F \cap T_E  = \Q$.
\end{lem}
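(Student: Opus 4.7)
The plan is to mimic the argument of Proposition~\ref{prop} in the base case $K=\Q$ and $\ell=2$, producing $F$ as a real quadratic extension of $\Q$ ramified at $2$ with $E(F)=E(\Q)$ and $F\cap T_E=\Q$.

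First, the condition $F\cap T_E=\Q$ is automatic once $F=\Q(\sqrt{d})$ is chosen with $d$ squarefree and even. Indeed, any quadratic subfield of $T_E$ lies in the maximal abelian subextension $T_E^{\mathrm{ab}}$, and the quadratic characters of $G_\Q$ factoring through $\Gal(T_E/\Q)$ are generated by: for each odd prime $\ell$, the character of $\Q(\sqrt{\ell^*})\subset\Q(\bmu_\ell)\subset\Q(E[\ell])$ (where $\ell^*=(-1)^{(\ell-1)/2}\ell$), arising from the quadratic character of $\GL_2(\F_\ell)^{\mathrm{ab}}\cong\F_\ell^\times$ composed with $\det\rho_\ell$; and the character of $\Q(\sqrt{-\Delta_E})$, the unique quadratic subfield of the $S_3$-extension $\Q(E[2])$ (the discriminant field of the Weierstrass cubic defining $E[2]$). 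Since $\Delta_E$ is odd (as $\Delta_E\equiv 1\pmod 4$), every such generator has odd squarefree part, and hence every quadratic subfield of $T_E^{\mathrm{ab}}$ is $\Q(\sqrt{m})$ with $m$ odd. Taking $d>0$ even squarefree (so $d\equiv 2\pmod 4$) then gives $F$ real, ramified at $2$, and disjoint from $T_E$.

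To arrange $E(F)=E(\Q)$, apply Proposition~\ref{l7.13}(i) with $K=\Q$, $\ell=2$, and $r:=\dim_{\F_2}\Sel(\Q,E[2])$ to find $T\subset\P_1$ of cardinality $r$ with $\Sel(\Q,E[2])_\a=0$ for $\a=\prod_{\p\in T}\p$. Adjoin two primes $\p_1,\p_2\in\P_0$ of distinct (odd) residue characteristics (Proposition~\ref{goodp}(i)) to form $T'$. Then follow the construction in the proof of Proposition~\ref{3.16}, modified to prescribe the local character $\omega_2$ at $2$ to be a \emph{ramified} quadratic character of $\Q_2^\times$ (rather than trivial), while keeping $\omega_\infty$ trivial, $\omega_\p$ ramified, and $\omega_v$ trivial for $v\in\Sigma_p\setminus\{2,\infty,\p,q\}$. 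The cited \cite[Proposition~10.7]{kmr2} produces a global character $\chi\in\cC(T')$ realizing these local components (up to the unramified quadratic twist at $q$ when $\ell=2$); its fixed field $F$ is a real quadratic extension of $\Q$ ramified at $2$, hence disjoint from $T_E$.

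It remains to verify $\Sel(F/\Q,E[2])=0$, from which $E(F)=E(\Q)$ follows by Proposition~\ref{ysel}(i)--(ii) (the latter using the ramification of $F$ at $\p_1,\p_2$). Adapt the proof of Proposition~\ref{l7.13}(ii): at $v\notin T\cup T_0\cup\{2\}$ the local conditions $\H_2(F_v/\Q_v)$ and $\H_2(\Q_v/\Q_v)$ agree by Propositions~\ref{hl2}(i) and \ref{3.5}; at $v\in T_0\cup\{\p,\p_1,\p_2\}\subset\P_0$ the local $H^1(\Q_v,E[2])$ vanishes; at $v\in T$, Proposition~\ref{hl1} and the global Tate duality argument of Proposition~\ref{l7.13}(ii) applied to our $T$ force $\loc_v(c)=0$ for any $c\in\Sel(F/\Q,E[2])$.

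The main technical obstacle is the new local condition at $v=2$: $\H_2(F_2/\Q_2)$ is a maximal isotropic one-dimensional subspace of the two-dimensional $H^1(\Q_2,E[2])$ whose precise position depends on $\omega_2$. Using Poitou--Tate duality \cite[Theorem~2.3.4]{kolysys}, the Selmer group strict-at-$T$ and relaxed-at-$\{2\}$ (extending $\Sel(\Q,E[2])_\a=0$) has image in $H^1(\Q_2,E[2])/\H_2(\Q_2/\Q_2)$ of dimension at most one. Exploiting the six ramified quadratic characters of $\Q_2^\times$ available for $\omega_2$, one chooses $\omega_2$ so that $\H_2(F_2/\Q_2)$ avoids this image, forcing $\Sel(F/\Q,E[2])=0$. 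If direct choice fails, adjoining an additional prime in $\P_1(\a)$ to $T$ (Proposition~\ref{goodp}(ii)) kills an extra Selmer class, and the process terminates after finitely many steps.
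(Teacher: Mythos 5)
Your approach is genuinely different from the paper's, and it has a real gap. The paper avoids the whole difficulty at $v=2$ by a twist trick: it applies Proposition~\ref{prop} to the quadratic twist $E^{(2)}$ with $K=\Q$, $\ell=2$, obtaining a quadratic field $L=\Q(\sqrt D)$ with $D$ odd (because $2$ splits in $L$, by condition (iv)) and positive, such that $E^{(2)}(L)=E^{(2)}(\Q)$. It then sets $F:=\Q(\sqrt{2D})$ and deduces $\rk E(F)=\rk E(\Q)$ from the twist identity $\rk A(\Q(\sqrt d))=\rk A(\Q)+\rk A^{(d)}(\Q)$ applied twice (once to $A=E^{(2)}$, $d=D$, once to $A=E$, $d=2D$). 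This never asks the Selmer machinery to handle a ramified local condition at $2$, which is exactly where your argument runs into trouble.

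Your last paragraph acknowledges the obstacle but does not resolve it: once you insist $\omega_2$ be ramified, $F/\Q$ is no longer $\Sigma$-split, so the global duality step in Proposition~\ref{l7.13}(ii) — which relies on the local conditions at all $v\in\Sigma\setminus\Sigma_0$ being unchanged — no longer applies. The new local condition $\H_2(F_2/\Q_2)$ is some maximal isotropic line in the $\F_2^{2d}$-space $H^1(\Q_2,E[2])$ (note $\dim H^1(\Q_2,E[2])=2\dim E(\Q_2)[2]$, which may be $4$, not $2$), and controlling whether the resulting relative Selmer group vanishes requires an additional duality computation that you sketch only heuristically ("exploiting the six ramified quadratic characters \ldots one chooses $\omega_2$ so that $\H_2(F_2/\Q_2)$ avoids this image"). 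That step is not justified: $\H_2(F_2/\Q_2)$ is determined by $\omega_2$ through Definition~7.1 of \cite{ds}, and there is no argument given that as $\omega_2$ ranges over ramified characters the subspace $\H_2(F_2/\Q_2)$ can be made to miss a prescribed line, nor that the fallback of enlarging $T$ terminates.

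Your disjointness argument is also more laborious than needed and implicitly assumes control of the entanglements among the fields $\Q(E[\ell])$ (you describe generators of the quadratic characters of $\Gal(T_E/\Q)$, which requires knowing $\Gal(T_E/\Q)^{\ab}$, not just the individual $\Gal(\Q(E[\ell])/\Q)^{\ab}$). The paper's argument is cleaner and sidesteps this: since $\Delta_E\equiv 1\pmod 4$ and $\Delta_E$ is odd, $2$ is tamely ramified in $\Q(E[2])/\Q$ and unramified in $\Q(E[\ell])/\Q$ for $\ell$ odd (good reduction at $2$ plus N\'eron--Ogg--Shafarevich), so $2$ is tamely ramified in all of $T_E/\Q$; but $F=\Q(\sqrt{2D})$ is wildly ramified at $2$, hence $F\cap T_E=\Q$. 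I'd recommend adopting the twist approach and the tame/wild disjointness argument.
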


\begin{proof}
If $A$ is an elliptic curve over $\Q$ and $d \in \Q^\times$, let $A^{(d)}$ denote 
the quadratic twist of $A$ corresponding to $\Q(\sqrt{d})/\Q$.  Then 
$(A^{(d)})^{(d')} = A^{(dd')}$ and 
\begin{equation}
\label{twistsum}
\rk\, A(\Q(\sqrt{d})) = \rk\, A(\Q) + \rk\, A^{(d)}(\Q).
\end{equation}

Fix an elliptic curve $E \in \cS$.  Apply Proposition \ref{prop} to the curve 
$E^{(2)}$ with $K = \Q$ and $\ell = 2$ to get a quadratic extension $L/\Q$ 
satisfying (i) through (iv) of Proposition \ref{prop}.  
Write $L = \Q(\sqrt{D})$ with a squarefree integer $D$, and put $F := \Q(\sqrt{2D})$. 

Applying \eqref{twistsum} with $A = E^{(2)}$ and $d = D$ yields
$$
\rk\, E^{(2)}(L) = \rk\, E^{(2)}(\Q) + \rk\, E^{(2D)}(\Q).
$$
By Proposition \ref{prop}(i) we have $E^{(2)}(L) = E^{(2)}(\Q)$, so $\rk\, E^{(2D)}(\Q) = 0$.
Applying \eqref{twistsum} with $A = E$ and $d = 2D$ yields
$$
\rk\, E(F) = \rk\, E(\Q) + \rk\, E^{(2D)}(\Q)
$$
so $\rk\, E(F) = \rk\, E(\Q)$.  By Proposition \ref{prop}(iii,iv) $D$ is positive, odd, 
and divisible by at least $2$ primes where $E$ has good reduction.  Thus $F$ is a real field, 
ramified at $2$, and $E(F) = E(\Q)$ by Proposition \ref{ysel}(ii).

Since the discriminant $\Delta_E \equiv 1 \pmod{4}$, the quadratic 
field $\Q(\sqrt{\Delta_E}) \subset \Q(E[2])$ is unramified at $2$, so  
$2$ is tamely ramified in $\Q(E[2])/\Q$.  Since $\Delta_E$ is odd, $E$ has good 
reduction at $2$, so $2$ is unramified in $\Q(E[\ell])/\Q$ for all 
odd primes $\ell$ \cite[Theorem VII.7.1]{silv}.  
Hence $2$ is tamely ramified in $T_E/\Q$, but $2$ is wildly ramified in $F/\Q$,
so $F \cap T_E = \Q$.
\end{proof}

\begin{thm}
\label{thm}
Suppose $E \in \cS$.
Then for every infinite sequence $\L$ of rational primes with $\ell_1 = 2$, 
there are uncountably many totally real 
$\L$-towers $K_\infty/\Q$ such that $E(K_\infty) = E(\Q)$.
\end{thm}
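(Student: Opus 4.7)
The plan is to build the $\L$-tower $K_\infty = \bigcup_i K_i$ inductively, seeding the construction with Lemma \ref{llem} for the $\ell_1 = 2$ step and adding one cyclic layer at a time via Proposition \ref{prop}. Set $K_0 := \Q$ and use Lemma \ref{llem} to choose $K_1$ to be a real quadratic field, ramified at $2$, with $E(K_1) = E(\Q)$ and $K_1 \cap T_E = \Q$.

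Inductively, assume $K_0 \subset K_1 \subset \cdots \subset K_i$ has been constructed so that each $K_j$ is totally real, $E(K_j) = E(\Q)$, $K_j \cap T_E = \Q$, and the partial segment satisfies conditions (i)--(iii) of Definition \ref{Ltowerdef} at every applicable index. Since $K_i$ is a finite nontrivial extension of $\Q$ with $K_i \cap T_E = \Q$ and $E(K_i) = E(\Q)$, Proposition \ref{prop} applied to $K_i$ and $\ell = \ell_{i+1}$ produces infinitely many cyclic extensions $K_{i+1}/K_i$ of degree $\ell_{i+1}$ satisfying its four conclusions. Parts (i) and (iv) of Proposition \ref{prop} immediately give $E(K_{i+1}) = E(K_i) = E(\Q)$ and preserve the totally real property, and Lemma \ref{towerlem}(ii) applied to the Galois extension $T_E/\Q$ yields $K_{i+1} \cap T_E = \Q$. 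The key point is verifying condition (iii) of Definition \ref{Ltowerdef} at index $i$: let $\p_i$ be the degree-$1$ prime of $K_i$ guaranteed by Proposition \ref{prop}(ii) and set $\q := \p_i \cap K_{i-1}$. The fact that $\p_i$ has residue degree $1$ and is unramified over $\Q$ forces $\q$ to have residue degree $1$ and be unramified below $\p_i$, and since $K_i/K_{i-1}$ is cyclic of prime degree $\ell_i$, this forces $\q$ to split completely in $K_i/K_{i-1}$. The $\ell_i - 1 \ge 1$ primes of $K_i$ above $\q$ other than $\p_i$ all share the residue characteristic of $\p_i$ and are therefore unramified in $K_{i+1}/K_i$ by Proposition \ref{prop}(ii), supplying the required companion prime $\p'_i$.

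With the induction complete, $K_\infty := \bigcup_i K_i$ is a totally real $\L$-tower over $\Q$ with $E(K_\infty) = \bigcup_i E(K_i) = E(\Q)$. For the uncountability count, Proposition \ref{prop} offers infinitely many choices of $K_{i+1}$ at each of the countably many induction steps, producing $\aleph_0^{\aleph_0} = 2^{\aleph_0}$ distinct choice sequences, and Corollary \ref{cor} ensures that distinct choice sequences give distinct fields $K_\infty$. The main obstacle is the combined bookkeeping: one must maintain the inductive invariants $E(K_i) = E(\Q)$ and $K_i \cap T_E = \Q$ (both needed as input to Proposition \ref{prop} at the next stage), and must argue that the bare degree-$1$ prime supplied by Proposition \ref{prop}(ii) automatically delivers condition (iii) of the $\L$-tower definition rather than merely a single ramified prime.
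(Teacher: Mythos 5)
Your proof is correct and follows essentially the same route as the paper: seed with Lemma~\ref{llem}, iterate Proposition~\ref{prop}, propagate $K_i\cap T_E=\Q$ via Lemma~\ref{towerlem}(ii), and get uncountability from the infinitude of choices at each stage together with Corollary~\ref{cor}. You are in fact more careful than the paper in one spot, spelling out how Proposition~\ref{prop}(ii) delivers the split pair of degree-one primes required by condition~(iii) of Definition~\ref{Ltowerdef} (the $e=f=1$ argument forcing $\q$ to split completely in the Galois extension $K_i/K_{i-1}$), a step the paper leaves implicit.
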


\begin{proof}
We build an $\L$-tower inductively as follows.  Let $K_0 = \Q$ and let $K_1$ be 
a field $F$ as in Lemma ]\ref{llem}.
Now for each $i \ge 2$ apply Proposition \ref{prop} to produce a cyclic extension 
$K_i/K_{i-1}$ of degree $\ell_i$ such that $E(K_i) = E(K_{i-1}) = E(\Q)$.

Since $K_1 \cap T_E = F\cap T_E = \Q$, Lemma \ref{towerlem}(ii) shows that $K_i \cap T_E = \Q$ and we can 
continue by induction.  The result is an $\L$-tower $K_\infty/\Q$ such that 
$E(K_\infty) = E(\Q)$.  
Since at each step Proposition \ref{prop} provides us with infinitely many possible $K_i$, 
and different choices give rise to distinct $\L$-towers by Corollary \ref{cor},
we obtain uncountably many $\L$-towers in this way.
\end{proof}

\begin{proof}[Proof of Theorem \ref{main}]
Let $\L$ be a sequence of rational primes such that 
\begin{itemize}
\item
$\ell_1 = 2$, and 
\item
every prime occurs infinitely many times in $\L$.  
\end{itemize}
If $K_\infty/\Q$ is an $\L$-tower, then $K_\infty$ is a big field.
We can apply Theorem \ref{thm}, taking for $E \in \cS_0$ the curve $67.a1$ from the proof 
of Lemma \ref{Sne}, to produce uncountably many $\L$-towers 
$K_\infty/\Q$ with $|E(K_\infty)| = |E(\Q)| = 1$; such a $K_\infty$ is a big field 
that is not large.  This concludes the proof of Theorem \ref{main}.
\end{proof}

\section{Additional remarks}\label{add}

\begin{rem}
Let $\K$ be the compositum of all $\Z_\ell$-extensions of $\Q$.  Then $\K$ is a big field, 
and we conjecture that $\K$ is not large.  

More precisely, let $E$ be an elliptic curve over $\Q$.  
In \cite[Conjecture 10.2]{expmath} we conjecture that $E(\K)$ is finitely generated.
Thus combining \cite[Conjecture 10.2]{expmath} with the following theorem 
of Fehm and Petersen leads to the conjecture that $\K$ is not large.

\begin{thm}[Fehm and Petersen, Theorem 1.2 of \cite{FP}] 
\label{11}
If $\K \subset {\bar{\Q}}$ is a large field and $A$ is an abelian variety over $\K$, 
then $A(\K)$ has infinite rank.
\end{thm}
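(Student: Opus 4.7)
My approach is to argue by contradiction using Faltings' theorem (the Mordell--Lang conjecture in characteristic zero). Suppose $A(\K)$ has finite rank, i.e.\ $A(\K)\otimes\Q$ is finite-dimensional. The plan is to exhibit a smooth projective curve $C\subset A$ of genus at least $2$, defined over $\K$ and containing the identity $0\in A(\K)$. Given such $C$, largeness of $\K$ forces $C(\K)$ to be infinite while Faltings' theorem forces it to be finite, and the contradiction gives the result.

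First I would reduce to $\dim A\ge 2$: if $\dim A=1$, replace $A$ by $A\times A$ and note $\rk\,(A\times A)(\K)=2\,\rk\,A(\K)$, so finite rank is preserved and infinite rank descends. Next I would construct $C$. Fix a very ample line bundle $L$ on $A$ with $L^{\dim A}$ large enough that, using $K_A=0$ and adjunction, a complete intersection of $\dim A-1$ hyperplane sections has genus $\ge 2$. Applying Bertini iteratively to the linear system of hyperplane sections through $0$ — valid because $\K$, being large, is infinite (already $\pp^1(\K)$ must be infinite) — one produces a smooth curve $C\subset A$ defined over $\K$ with $0\in C(\K)$ and $g(C)\ge 2$.

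Now the two hypotheses collide. On one hand, since $\K$ is large and $C$ is a smooth curve with the $\K$-rational point $0$, the definition of largeness forces $C(\K)$ to be infinite. On the other hand, a finite-rank abelian group inside $A(\bar\Q)$ is contained in the divisible hull of a finitely generated subgroup, and Faltings' theorem on the Mordell--Lang conjecture asserts that the intersection of such a subgroup with $C(\bar\Q)$ is finite, because $C$ has genus $\ge 2$ and therefore contains no translate of a positive-dimensional abelian subvariety. Taking the subgroup to be $A(\K)$ gives $C(\K)\subset C(\bar\Q)\cap A(\K)$ finite, contradicting the previous sentence. Hence $A(\K)$ has infinite rank.

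The main obstacle is the curve-construction step: one needs a smooth genus-$\ge 2$ curve in $A$ defined over $\K$ (not just over $\bar\K$) and passing through $0$. Bertini over the infinite field $\K$ handles smoothness generically, but the linear system of hyperplane sections through $0$ has $0$ itself as a base point, so one must check separately that a general such section is smooth at $0$ — this follows from the very-ampleness of $L$, which makes the natural evaluation map on sections vanishing at $0$ surject onto $\m_0/\m_0^2$. Once $C$ is at hand, the two inputs (the definition of large and Faltings' Mordell--Lang) are applied as black boxes.
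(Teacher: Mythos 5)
This statement is not proved in the paper: it is quoted as Theorem 1.2 of Fehm--Petersen \cite{FP}, so there is no in-paper argument to compare yours against. The authors use it purely as a black box to deduce that a field over which some abelian variety has finitely generated Mordell--Weil group cannot be large.

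On its own merits, your proposed proof is essentially correct and is in fact the standard Mordell--Lang route, which is the same family of ideas used by Fehm and Petersen (who in turn attribute the elliptic-curve case to Tamagawa). The two ingredients you combine are exactly right: largeness forces $C(\K)$ to be infinite for any smooth $\K$-curve $C$ with a $\K$-point, while Faltings' theorem (in its finite-rank form, i.e.\ for division groups of finitely generated subgroups, due to Faltings together with Hindry/McQuillan/Vojta) forces $C\cap\Gamma$ to be finite for $C$ of genus $\ge 2$ inside $A$ and $\Gamma\subset A(\bar\Q)$ of finite rank, since $C$ contains no translate of a positive-dimensional abelian subvariety. Your reduction to $\dim A\ge 2$ via $A\times A$ is the right move, and you correctly flag the only real technical point: running Bertini over $\K$ for the sub-linear system of sections through $0$, where $0$ is a base point. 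The fix you sketch (very ampleness gives surjectivity of $H^0(A,L\otimes\m_0)\to\m_0/\m_0^2$, so a general member is smooth at $0$, and $\K$ is infinite because $\pp^1(\K)$ is infinite) is adequate; one should also note that Bertini's connectedness theorem keeps the general member geometrically irreducible at each stage so that the final complete intersection is an honest smooth projective curve over $\K$. With that observation recorded, the argument closes.
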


Theorem \ref{11} was first proved by Tamagawa in the case that $A$ is an elliptic curve  
(see the remark at the bottom of page 580 of \cite{FP}).
\end{rem}

\begin{rem}
By  Main Theorem A of \cite{htp}, Hilbert's Tenth Problem 
has a negative answer for the ring of integers in any subfield $K$ of ${\bar{\Q}}$ 
satisfying
\begin{itemize} 
\item 
$K$ is totally real, and 
\item 
there is an elliptic curve $E$ over $K$ such that $E(K)$ 
is finitely generated and has positive rank.
\end{itemize} 
By Theorem \ref{11}, such a $K$ is not a large field.

Applying Theorem \ref{thm} with the elliptic curve $E = 37.a1$ as in Remark \ref{rmk1},
one can find uncountably many $\L$-towers $K_\infty/\Q$ with $K_\infty$ totally real 
and big such that $E(K_\infty) = E(\Q) \cong \Z$. 
This gives uncountably many non-large big fields over whose ring of integers 
Hilbert's Tenth Problem has a negative answer.

It is natural to ask whether there is {\em any} non-large field over whose 
ring of integers Hilbert's Tenth Problem has a positive answer.
\end{rem}

\end{document}